\definecolor{webgreen}{rgb}{0,.5,0}
\def\C{{\mathds{C}}}
\def\R{{\mathbb{R}}}
\def\N{{\mathds{N}}}
\def\1{{\bf 1}}
\def\id{\operatorname{id}}
\def\lcm{\operatorname{lcm}}
\def\DOT{\text{\rm\Huge{.}}}
\newtheorem{theorem}{Theorem}
\newtheorem{thm}[theorem]{Theorem}
\newtheorem{lemma}[theorem]{Lemma}
\newtheorem{cor}[theorem]{Corollary}
\newtheorem{Con}[theorem]{Conjecture}
\newtheorem{prop}[theorem]{Proposition}
\newtheorem{remark}[theorem]{Remark}
\newtheorem{question}[theorem]{Question}
\begin{document}

\title{{\bf Unitary cyclotomic polynomials}}
\author{Pieter Moree and L\'aszl\'o T\'oth}
\maketitle

\begin{abstract}
The notion of block divisibility naturally leads one to introduce
unitary cyclotomic polynomials.
We formulate some basic properties of unitary cyclotomic polynomials and study
how they are connected with cyclotomic,
inclusion-exclusion and Kronecker polynomials. Further, we derive
some related arithmetic function identities involving the unitary analog of the Dirichlet
convolution.
\end{abstract}

\tableofcontents

\section{Introduction}

\subsection{Unitary divisors}

A divisor $d$ of $n$ ($d,n\in \N$) is called a \emph{unitary divisor} (or block divisor) if $(d,n/d)=1$, notation $d\mid\mid n$
(note that this is in agreement with the standard notation $p^a \mid\mid n$ used for prime powers $p^a$). If the prime power factorization of
$n$ is $n=p_1^{a_1}\cdots p_s^{a_s}$, then the
set of its unitary divisors consists of the
integers $d=p_1^{b_1}\cdots p_s^{b_s}$, where $b_i=0$ or $b_i=a_i$ for any $1\le i\le s$.

\par The study of \emph{arithmetic functions} defined by unitary divisors goes back to Vaidyanathaswamy \cite{Vai1931} and Cohen \cite{Coh1960}.
For example, the analogs of the sum-of-divisors function $\sigma$ and Euler's totient function $\varphi$ are
$\sigma^*(n)=\sum_{d\mid\mid n} d$, respectively $\varphi^*(n)=\# \{j: 1\le j \le n, (j,n)_*=1\}$, where
\begin{equation*}
(j,n)_*=\max \{d: d\mid j,\, d\mid\mid n\}.
\end{equation*}

Several properties of the unitary functions $\sigma^*$ and $\varphi^*$ run parallel to
those of $\sigma$ and $\varphi$, respectively. For example, both functions $\sigma^*$ and $\varphi^*$ are multiplicative, and
$\sigma^*(p^a)= p^a+1$, $\varphi^*(p^a)=p^a-1$ for prime
powers $p^a$ ($a \ge 1$). The \emph{unitary convolution} of the functions $f$ and $g$ is defined by
\begin{equation*}
(f\times g)(n)=\sum_{d\mid\mid n} f(d)g(n/d) \quad (n\in \N).
\end{equation*}

The set of arithmetic functions $f$ such that $f(1)\ne 0$ forms a commutative group under the unitary convolution and the set of multiplicative
functions is a subgroup. The identity is the function $\epsilon$, given by $\epsilon(1)=1$, $\epsilon(n)=0$ ($n>1$), similar to the case of
Dirichlet convolution. The inverse of the constant $1$ function under the unitary convolution is $\mu^*(n)=(-1)^{\omega(n)}$, where
$\omega(n)$ denotes the number of distinct prime
factors of $n$. That is,
\begin{equation} \label{unit_Mobius}
\sum_{d\mid \mid n} \mu^*\left(\frac{n}{d}\right)=\sum_{d\mid \mid n} \mu^*(d)= \epsilon(n) \quad (n\in \N).
\end{equation}
See, e.g., the books by Apostol \cite{Apostolbook}, McCarthy \cite{McC1986} and Sivaramakrishnan \cite{Siv1989}.

\subsection{Unitary Ramanujan sums}

The \emph{unitary Ramanujan sums} $c^*_n(k)$ were defined by Cohen \cite{Coh1960} as follows:
\begin{equation*}
c^*_n(k) = \sum_{\substack{1\le j\le n\\ (j,n)_*=1}} \zeta_n^{jk} \quad (k,n\in \N),
\end{equation*}
where $\zeta_n:=e^{2\pi i/n}.$ (The classical Ramanujan sums are defined similarly, but with $(j,n)_*=1$
replaced by $(j,n)=1.$)

The identities
\begin{equation}\label{unit_c_1}
c^*_n(k) = \sum_{d\mid\mid (k,n)_*} d\mu^*(n/d) \quad (n,k\in \N),
\end{equation}
\begin{equation} \label{unit_c_2}
\sum_{d\mid\mid n} c^*_d(k) = \varrho_n(k)= \begin{cases} n & \text{ if $n\mid k$;} \\ 0 & \text{ otherwise,}
\end{cases}
\end{equation}
can be compared to the corresponding ones concerning the classical Ramanujan sums $c_n(k)$. Note that
$c^*_n(n)=\varphi^*(n)$, $c^*_n(1)=\mu^*(n)$ ($n\in \N$).

\subsection{Unitary cyclotomic polynomials}

The \emph{cyclotomic polynomials} $\Phi_n(x)$ are defined by
\begin{equation}
\label{definitie}
\Phi_n(x)=\prod_{\substack{j=1\\ (j,n)=1}}^n \left(x- \zeta_n^j \right).
\end{equation}

They arise as irreducible factors (see Weintraub \cite{Wein})  on
factorizing $x^n-1$ over the rationals:
\begin{equation}
    \label{factor}
    x^n-1=\prod_{d\mid n}\Phi_d(x).
\end{equation}

By M\"obius inversion it follows from \eqref{factor} that
\begin{equation} \label{Phi}
\Phi_n(x)=\prod_{d\mid n} \left(x^{n/d}-1\right)^{\mu(d)}=\prod_{d\mid n} \left(x^{d}-1\right)^{\mu(n/d)},
\end{equation}
where $\mu$ denotes the M\"obius function.

The \emph{unitary cyclotomic polynomial} $\Phi^*_n(x)$ is defined by
\begin{equation} \label{def_unit_cyclotomic}
\Phi^*_n(x)=\prod_{\substack{j=1\\ (j,n)_*=1}}^n \left(x- \zeta_n^j \right),
\end{equation}
see \cite[Ch.\ X]{Siv1989}. It is monic, has integer coefficients and is of degree $\varphi^*(n)$. Furthermore, for any
natural number $n$ we have
\begin{equation} \label{prod_Phi_star}
x^n-1 = \prod_{d\mid\mid n} \Phi^*_d(x)
\end{equation}
and
\begin{equation} \label{Phi_star}
\Phi^*_n(x)=\prod_{d\mid\mid n} \left(x^{n/d}-1\right)^{\mu^*(d)}=\prod_{d\mid\mid n} \left(x^{d}-1\right)^{\mu^*(n/d)}.
\end{equation}

See Section \ref{basicunitary} for short direct proofs of these properties and further
basic properties of unitary cyclotomic
polynomials.

If $n$ is squarefree, then the unitary divisors of $n$ coincide with the divisors of $n$ and hence comparing
\eqref{Phi} with \eqref{Phi_star} yields
$\Phi^*_n(x)=\Phi_n(x)$. In this case $\Phi^*_n(x)$ is irreducible
over the rationals.
However, a quick check shows, that for certain non-squarefree values of $n$, the polynomial $\Phi^*_n(x)$ is reducible over the rationals.
For example,
$\Phi^*_{12}(x) = \Phi_6(x)\Phi_{12}(x)$ and $\Phi^*_{40}(x) = \Phi_{10}(x)\Phi_{20}(x)\Phi_{40}(x)$.
Indeed, we will show that $\Phi_n^*$ is reducible
for \emph{every} non-squarefree integer $n$.
This is a corollary of the fact
that each polynomial $\Phi^*_n(x)$ can be written as
the product of the cyclotomic
polynomials $\Phi_d(x)$, where $d$ runs over the divisors of $n$ such that $\kappa(d)=\kappa(n)$,
with $\kappa(n)$ the \emph{squarefree kernel} of $n$
(Theorem \ref{Th_pol}).
In fact, this is a consequence of a more general result (Theorem \ref{Th_general}) involving unitary divisors.

One can introduce the \emph{bi-unitary cyclotomic polynomials} $\Phi^{**}_n(x)$ defined by
\begin{equation*}
\Phi^{**}_n(x)=\prod_{\substack{j=1\\ (j,n)_{**}=1}}^n \left(x- \zeta_n^j \right),
\end{equation*}
where $(j,n)_{**}$ stands for the greatest common unitary divisor of $j$ and $n$. The degree of the polynomial $\Phi^{**}_n(x)$
equals $\varphi^{**}(n)$, the bi-unitary Euler function, which is defined as $\varphi^{**}(n)=\# \{j: 1\le j \le n, (j,n)_{**}=1\},$ see the
paper \cite{Tot2009}. Although these definitions seem to be more natural than the previous ones, the properties of $\Phi^{**}_n(x)$
and $\varphi^{**}(n)$ are not similar to their unitary analogs. For
example, the function $\varphi^{**}(n)$ is not multiplicative and the
coefficients of the polynomials $\Phi^{**}_n(x)$ are in
general not integers
(we have, e.g., $\Phi^{**}_6(x)=x^3-\overline{\eta}x^2+\overline{\eta}x+ \eta$, where $\eta=(1+i\sqrt{3})/2$ and $\overline{\eta}=(1-i\sqrt{3})/2$).

\subsection{Inclusion-exclusion and Kronecker polynomials}

Let $\rho=\{r_1,r_2,\ldots,r_s\}$ be a set of increasing natural numbers satisfying
$r_i>1$ and $(r_i,r_j)=1$ for $i\ne j$, and put
$$n_0=\prod_i r_i,~n_i=\frac{n_0}{r_i},~n_{ij}=\frac{n_0}{r_ir_j}~[i\ne j],\ldots$$
For each such $\rho$ we define a function $Q_{\rho}$ by
\begin{equation}
\label{Qformula}
Q_{\rho}(x)=\frac{(x^{n_0}-1)\cdot \prod_{i<j}(x^{n_{ij}}-1)\cdots}
{\prod_i (x^{n_i}-1)\cdot \prod_{i<j<k}(x^{n_{ijk}}-1)\cdots}.
\end{equation}
It can be shown that $Q_{\rho}(x)$ is a polynomial
of degree $n_0\prod_{r_i\mid n_0}(1-1/r_i)$ having integer
coefficients.
This class of polynomials was introduced by Bachman \cite{Bac2010}, who
named them \emph{inclusion-exclusion polynomials}.

A \emph{Kronecker polynomial} $f\in \mathbb Z[x]$ is
a monic polynomial having all its roots inside or on the
unit circle. It was proved by Kronecker, cf.\,\cite{Dam}, that
such a polynomial is
a product of a monomial and cyclotomics and so we can write
\begin{equation}
\label{Kroneckerdecomposition}
f(x)=x^s\prod_{d}\Phi_d(x)^{e_d},
\end{equation}
with $s,e_d\ge 0$ and $e_d\ge 1$ for only finitely many $d$.

We will show how a unitary cyclotomic can be realized as an inclusion-exclusion
cyclotomic. As $Q_{\rho}(x)$ is monic and in $\mathbb Z[x],$ it follows from
\eqref{Qformula} that it is Kronecker.
Thus we have the following inclusions:
\begin{equation}
    \label{inclusions}
\{\text{unitary cyclotomics}\}\subset
\{\text{inclusion-exclusion polynomials}\}\subset \{\text{Kronecker polynomials}\}.
\end{equation}

The inclusion-exclusion polynomials that
are unitary can be precisely identified (for the proof see
Section \ref{sec:relation}).
\begin{theorem}
\label{setequal}
The set of unitary polynomials $\Phi^{*}_n(x)$ with $n\ge 2$ equals the set of
inclusion-exclusion polynomials $Q_{\rho}(x)$ with $\rho$ having prime
power entries, with no base prime repeated. More precisely there
is a one-to-one map between these sets that sends $n$ to
$\rho=\{p_1^{e_1},\ldots, p_s^{e_s}\},$ where $p_1^{e_1}\cdots p_s^{e_s}$
 with $p_1^{e_1}<\ldots <p_s^{e_s}$ is the prime factorization of $n$, resulting
 in
 $$\Phi^{*}_n(x)=Q_{\{p_1^{e_1},\ldots, p_s^{e_s}\}}(x).$$
\end{theorem}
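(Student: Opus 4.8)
The plan is to compare the two product formulas directly. On the one hand, formula \eqref{Phi_star} gives, for $n = p_1^{e_1}\cdots p_s^{e_s}$,
\[
\Phi^*_n(x) = \prod_{d \mid\mid n} \left(x^{n/d}-1\right)^{\mu^*(d)},
\]
where the unitary divisors $d$ of $n$ are exactly the products $\prod_{i \in I} p_i^{e_i}$ over subsets $I \subseteq \{1,\dots,s\}$, and $\mu^*(d) = (-1)^{|I|}$ for such $d$. On the other hand, with $\rho = \{r_1,\dots,r_s\}$ where $r_i = p_i^{e_i}$ — note these are pairwise coprime and each exceeds $1$, so $\rho$ is an admissible index set for Bachman's construction — we have $n_0 = r_1\cdots r_s = n$, and more generally $n_0/(r_{i_1}\cdots r_{i_k}) = n / \prod_{j} p_{i_j}^{e_{i_j}}$. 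So in formula \eqref{Qformula} the factor $(x^{n_0/(r_{i_1}\cdots r_{i_k})}-1)$ appears with exponent $+1$ when $k$ is even and $-1$ when $k$ is odd. Writing $d = r_{i_1}\cdots r_{i_k}$, which is precisely a generic unitary divisor of $n$ with $\omega(d) = k$, the exponent is $(-1)^k = \mu^*(d)$. Hence $Q_\rho(x)$ and $\Phi^*_n(x)$ are given by identical products, proving $\Phi^*_n(x) = Q_{\{p_1^{e_1},\dots,p_s^{e_s}\}}(x)$.

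Next I would verify that this correspondence is a well-defined bijection between the two indexing sets. Given $n \ge 2$, its prime factorization determines a unique multiset $\{p_1^{e_1},\dots,p_s^{e_s}\}$ of prime powers with distinct underlying primes, which we order increasingly; since the $p_i$ are distinct the prime powers $p_i^{e_i}$ are automatically distinct as well (no two prime powers of distinct primes coincide), so the ordering is unambiguous and $\rho$ is a genuine set satisfying Bachman's hypotheses. Conversely, any $\rho = \{q_1,\dots,q_s\}$ whose entries are prime powers with no base prime repeated yields $n_0 = q_1\cdots q_s$ whose prime factorization recovers exactly that set of prime powers; thus the map $n \mapsto \rho$ and the map $\rho \mapsto n_0$ are mutually inverse. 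One should also observe that $Q_\rho$ depends only on the set $\rho$, not on any ordering, which is clear from \eqref{Qformula} since the formula is symmetric in the $r_i$.

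The one genuine subtlety — and the step I expect to require the most care — is the direction "every unitary cyclotomic arises this way, and conversely." For the forward direction, the case $s = 1$ (i.e. $n = p^e$ a prime power) must be handled: then $\rho = \{p^e\}$ is a one-element set, $Q_\rho(x) = (x^{p^e}-1)/1 = x^{p^e}-1$, and indeed $\Phi^*_{p^e}(x) = x^{p^e}-1$ by \eqref{prod_Phi_star} applied with the only unitary divisors $1$ and $p^e$. For the converse direction, one must confirm that there is no collapsing: distinct admissible $\rho$ give distinct polynomials $Q_\rho$, which follows because the degree is $n_0 \prod(1 - 1/r_i) = \varphi^*(n_0)$ and, more to the point, because $Q_\rho$ determines $n_0$ as one more than... — more cleanly, $Q_\rho = \Phi^*_{n_0}$ and the roots of $\Phi^*_{n_0}$ are primitive-in-the-unitary-sense $n_0$-th roots of unity, so $n_0$ (hence $\rho$) is recovered from the polynomial. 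Assembling these observations, the one-to-one correspondence and the identity $\Phi^*_n(x) = Q_{\{p_1^{e_1},\dots,p_s^{e_s}\}}(x)$ follow, completing the proof.
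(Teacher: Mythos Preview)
Your argument is essentially the paper's: both establish $\Phi^*_n(x)=Q_{\{p_1^{e_1},\ldots,p_s^{e_s}\}}(x)$ by comparing formula \eqref{Phi_star} with \eqref{Qformula}, identifying unitary divisors of $n$ with subsets $I\subseteq\{1,\ldots,s\}$ and matching the sign $(-1)^{|I|}=\mu^*(d)$. The difference is in the ``one-to-one'' clause. The paper invokes Theorem~\ref{uniek} (itself resting on the Kronecker--Witt-type expansion of Lemma~\ref{lem:Kroneckerexpansion}) to conclude that $\rho\mapsto Q_\rho$ is injective in full generality. You instead observe, more cheaply, that $\zeta_n$ is always a root of $\Phi^*_n$ (since $(1,n)_*=1$), so the maximal multiplicative order among the roots recovers $n$, hence $\rho$. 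This is a perfectly valid and more elementary route for the restricted class of $\rho$ at hand; it just does not yield the general uniqueness statement for inclusion-exclusion polynomials that the paper wants for other purposes.

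One genuine slip: your treatment of the case $s=1$ is miscomputed. With $\rho=\{p^e\}$ one has $n_0=p^e$ and $n_1=n_0/r_1=1$, so
\[
Q_{\{p^e\}}(x)=\frac{x^{p^e}-1}{x-1},
\]
not $x^{p^e}-1$; and correspondingly \eqref{prod_Phi_star} gives $x^{p^e}-1=\Phi^*_1(x)\Phi^*_{p^e}(x)=(x-1)\Phi^*_{p^e}(x)$, so $\Phi^*_{p^e}(x)=(x^{p^e}-1)/(x-1)$ as in \eqref{totaalflauw}. The two still agree, and in any case your first paragraph already handles all $s\ge 1$ uniformly, so this side case is redundant --- but as written it is wrong.
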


This theorem shows that the first inclusion in \eqref{inclusions} is strict, e.g., $Q_{\{5,6\}}(x)$ is not a unitary cyclotomic.
By Theorem \ref{Th_pol} (or Theorem \ref{Thm:Bachman})
any Kronecker polynomial divisible
by $\Phi_d(x)^2$ for some $d\ge 1$ cannot be
an inclusion-exclusion polynomial, and so also the second inclusion is strict.
Even more, it is easy to see that
for both inclusions the
set theoretic differences are
infinite.

We would like to point out that in this paper, with the
exception of Theorem \ref{thm:main2}, the nomination ``theorem'' is not used to indicate
a deep result, but rather a key fact.

\section{Elementary properties of unitary cyclotomic polynomials}
\label{basicunitary}
The polynomials $\Phi^*_n(x)$ have integer coefficients. This follows by induction on $n$ by taking into account identity \eqref{prod_Phi_star},
similar to the case of classical cyclotomic polynomials. Indeed, various of our arguments in
this section closely mirror those for cyclotomic polynomials and can, in somewhat more detail than
we provided, be found in Thangadurai \cite{Thanga}.

By the definition \eqref{def_unit_cyclotomic} and \eqref{unit_Mobius},
\begin{equation*}
\log \Phi^*_n(x)= \sum_{\substack{j=1\\ (j,n)_*=1}}^n \log \left(x- \zeta_n^j\right)= \sum_{j=1}^n
\log \left(x- \zeta_n^j \right) \sum_{d\mid\mid (j,n)_*} \mu^*(d).
\end{equation*}
Note that $d\mid\mid (j,n)_*$  holds if and only if $d\mid j$ and $d\mid\mid n$. Hence
\begin{equation*}
\log \Phi^*_n(x)= \sum_{d\mid\mid n} \mu^*(d) \sum_{k=1}^{n/d}
\log \left(x- \zeta_{n/d}^k \right) = \sum_{d\mid\mid n} \mu^*(d) \log \left(x^{n/d}-1\right),
\end{equation*}
giving \eqref{Phi_star}, which
by unitary M\"{o}bius inversion is equivalent to \eqref{prod_Phi_star}.

The unitary divisors of prime powers $p^a$ ($a \ge 1$) are $1$ and $p^a$. We deduce by \eqref{prod_Phi_star}
that
\begin{equation}
\label{totaalflauw}
\Phi^*_{p^a}(x)=\frac{x^{p^a}-1}{x-1}=\prod_{j=1}^a \Phi_{p^j}(x).
\end{equation}

From formula \eqref{Phi_star} we immediately see that
the Taylor series of $\Phi_n^*(x)$ around $x=0$ has
integer coefficients, showing again that the coefficients of $\Phi^*_n(x)$ have to be integers.

Using  \eqref{unit_Mobius}, we see that, for
$n>1,$ we can rewrite \eqref{Phi_star} as
\begin{equation} \label{Phi_star-1}
\Phi^*_n(x)=\prod_{d\mid\mid n} \left(1-x^{d}\right)^{\mu^*(n/d)}.
\end{equation}

From \eqref{Phi_star-1} and \eqref{Phi_star} it follows that for $n>1$
\begin{equation} \label{selfreciprocal}
\Phi^*_{n}(x)=x^{\varphi^*(n)}\Phi^*_{n}(1/x),
\end{equation}
in other words, unitary cyclotomics are
\emph{self-reciprocal}.

For odd $n>1,$ we have
\begin{equation} \label{2n}
\Phi^*_{2n}(x)=\Phi^*_{n}(-x).
\end{equation}
In order to prove this we invoke
\eqref{Phi_star-1} and
group the even and odd unitary divisors together.
This leads to
\begin{eqnarray*}
\Phi^*_{2n}(x) &=& \prod_{2d\mid \mid 2n}(1-x^{2d})^{\mu^*(n/d)} \prod_{d\mid \mid n}(1-x^{d})^{\mu^*(2n/d)};\\
&=& \prod_{d\mid \mid n}(1-x^{2d})^{\mu^*(n/d)} \prod_{d\mid \mid n}(1-x^{d})^{-\mu^*(n/d)};\\
&=&  \prod_{d\mid \mid n}(1+x^{d})^{\mu^*(n/d)} =\Phi^*_n(-x).
\end{eqnarray*}

Let $k\ge 1$ be an integer and $p\nmid n$ a prime. The unitary divisors
of $p^kn$ come in two flavors:
those of the form $p^kd$ with $d\mid \mid n,$ and
those of the form $d\mid \mid n.$ On grouping these
together we obtain from \eqref{Phi_star-1} that
\begin{equation} \label{pkn}
\Phi^*_{p^{k}n}(x)=\frac{\Phi^*_{n}(x^{p^{k}})}
{\Phi^*_{n}(x)}.
\end{equation}

Also,
\begin{equation}
\label{powerreduce}
\Phi^*_{p^kn}(x)=\prod_{j=0}^{k-1}\Phi^*_{pn}(x^{p^{j}}).
\end{equation}

To see this we write each of the terms appearing in right hand side as a quotient of two unitaries given by \eqref{pkn}. We so obtain a
quotient of two unitaries, which equals the left hand side of \eqref{powerreduce} by \eqref{pkn} again.

Let $\Phi^*_n(x)=x^{\varphi^*(n)}+ b_1x^{\varphi^*(n)-1}+\cdots +b_{\varphi^*(n)}$. It follows, similar to the classical case, that
$b_1= - c^*_n(1)= -\mu^*(n)$ for every $n\in \N$.

\section{Unitary cyclotomic polynomials as products of cyclotomic polynomials}

Recall that $\kappa(n)=\prod_{p\mid n}p$ is the square-free kernel of $n.$
\begin{theorem} \label{Th_pol}  For any natural number $n$ we have
\begin{equation} \label{Phi_Phi_star}
\Phi^*_{n}(x)= \prod_{\substack{d\mid n\\ \kappa(d)=\kappa(n)}} \Phi_d(x).
\end{equation}
\end{theorem}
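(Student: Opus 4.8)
The plan is to prove \eqref{Phi_Phi_star} by combining the two product decompositions of $x^n-1$ that are already at hand: the classical one \eqref{factor}, $x^n-1=\prod_{d\mid n}\Phi_d(x)$, and the unitary one \eqref{prod_Phi_star}, $x^n-1=\prod_{e\mid\mid n}\Phi^*_e(x)$. The first step is to sort the divisors of $n$ according to their squarefree kernel: every $d\mid n$ has a kernel $\kappa(d)$ which is itself a squarefree divisor of $n$, and for a squarefree divisor $r$ of $n$ the divisors $d$ of $n$ with $\kappa(d)=r$ are exactly the divisors of the largest unitary divisor of $n$ supported on the primes of $r$ — equivalently, writing $n=p_1^{a_1}\cdots p_s^{a_s}$, a divisor $d\mid n$ satisfies $\kappa(d)=\kappa(n)$ iff $d$ is divisible by every prime dividing $n$. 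More generally, I would define, for each squarefree divisor $r=\kappa(e)$ coming from a unitary divisor $e\mid\mid n$, the polynomial $\Psi_e(x)=\prod_{d\mid n,\,\kappa(d)=\kappa(e)}\Phi_d(x)$, so that $\Psi_n$ is the right-hand side of \eqref{Phi_Phi_star} and the claim becomes $\Phi^*_n=\Psi_n$.

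Next I would observe that the divisors of $n$ are partitioned by the value of their kernel, and that each attainable kernel value is $\kappa(e)$ for a unique unitary divisor $e\mid\mid n$ (namely $e=\prod_{p\mid r}p^{a_p}$). Hence \eqref{factor} factors as
\begin{equation*}
x^n-1=\prod_{d\mid n}\Phi_d(x)=\prod_{e\mid\mid n}\ \prod_{\substack{d\mid n\\ \kappa(d)=\kappa(e)}}\Phi_d(x)=\prod_{e\mid\mid n}\Psi_e(x).
\end{equation*}
Comparing this with \eqref{prod_Phi_star}, $x^n-1=\prod_{e\mid\mid n}\Phi^*_e(x)$, gives two factorizations of $x^n-1$ indexed by the same set of unitary divisors $e\mid\mid n$. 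To conclude $\Psi_e=\Phi^*_e$ for all $e$ (in particular for $e=n$) I would induct on the number of prime factors of $n$, or equivalently on $n$ through its unitary divisors: the base case $n=1$ is trivial, and for the inductive step the factors indexed by the \emph{proper} unitary divisors $e\mid\mid n$, $e<n$, satisfy $\Psi_e=\Phi^*_e$ by the induction hypothesis (since each such $e$ has fewer prime factors, or at least is a strictly smaller modulus whose structure is the same), so the remaining factors $\Psi_n$ and $\Phi^*_n$ must be equal. Alternatively one can avoid induction by applying unitary Möbius inversion directly to $x^n-1=\prod_{e\mid\mid n}\Psi_e(x)$, which by \eqref{Phi_star} forces $\Psi_n(x)=\prod_{d\mid\mid n}(x^{n/d}-1)^{\mu^*(d)}=\Phi^*_n(x)$.

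The one genuine point requiring care — the main obstacle, modest as it is — is the combinatorial claim that the map $e\mapsto\kappa(e)$ is a bijection from the unitary divisors of $n$ to the squarefree divisors of $n$, and that $\{d\mid n:\kappa(d)=\kappa(e)\}=\{d: e/\kappa(e)\cdot(\text{something})\}$ is exactly the set of divisors $d\mid n$ whose prime support equals that of $e$ and which are "full" at those primes in the sense of running through all exponents up to $a_p$; this is what makes the coarse partition of $\{d:d\mid n\}$ by kernel line up cell-by-cell with the unitary divisors. Concretely: fixing the prime support $S\subseteq\{p_1,\dots,p_s\}$, the divisors $d\mid n$ with prime support exactly $S$ are $\prod_{p\in S}p^{b_p}$ with $1\le b_p\le a_p$, their product of $\Phi_d$ telescopes — via \eqref{factor} applied twice — to $\prod_{p\in S}p^{a_p}$-related factors, and this product is precisely $\Phi^*_{\prod_{p\in S}p^{a_p}}$ by \eqref{prod_Phi_star}. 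Once this bijection and the matching of cells is stated cleanly, the proof is a two-line comparison of the two factorizations of $x^n-1$; I expect the write-up to spend most of its words making the indexing transparent rather than on any analytic content.
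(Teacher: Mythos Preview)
Your proof is correct, and it takes a genuinely different route from the paper's.  The paper starts from the explicit formula \eqref{Phi_star}, $\Phi^*_n(x)=\prod_{d\mid\mid n}(x^d-1)^{\mu^*(n/d)}$, substitutes \eqref{factor} into each factor, and then directly evaluates the exponent $e_\delta=\sum_{k\delta\mid\mid n}\mu^*(n/k\delta)$ of each $\Phi_\delta$: the key step is the observation that if $d$ is the smallest block divisor of $n$ that is a multiple of $\delta$, then the sum collapses via \eqref{unit_Mobius} to $\epsilon(n/d)$, which is $1$ exactly when $\kappa(\delta)=\kappa(n)$.  Your argument instead works ``from the top'': you partition the divisors of $n$ by their kernel, use the bijection between squarefree divisors and unitary divisors of $n$ to regroup \eqref{factor} as $x^n-1=\prod_{e\mid\mid n}\Psi_e(x)$, and then compare with \eqref{prod_Phi_star} via induction or unitary M\"obius inversion.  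Your approach is perhaps more conceptual---the identity falls out of a single partition of the divisor set---while the paper's approach is a one-shot computation that avoids any induction and makes the exponent of every $\Phi_\delta$ explicit at once.  One small point worth tightening in your write-up: for the induction (or the M\"obius inversion) to go through you implicitly use that $\Psi_e$ depends only on $e$ and not on the ambient $n$, i.e.\ that $\{d\mid n:\kappa(d)=\kappa(e)\}=\{d\mid e:\kappa(d)=\kappa(e)\}$ for $e\mid\mid n$; this is true and easy, but it is the hinge of the argument and should be stated rather than left inside the parenthetical.
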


\begin{proof}
Combination of \eqref{Phi_star} with   \eqref{factor} yields
\begin{equation} \label{blah}
\Phi^*_n(x)=\prod_{d\mid\mid n} \left(x^{d}-1\right)^{\mu^*(n/d)}=
\prod_{d\mid\mid n}\Big(\prod_{\delta \mid d}\Phi_{\delta}(x)\Big)^{\mu^*(n/d)}.
\end{equation}

We thus find that $\Phi^*_n(x)=\prod_{\delta\mid n}\Phi_{\delta}(x)^{e_{\delta}},$ with
\begin{equation} \label{e_delta}
e_{\delta}=\sum_{k\delta \mid \mid n}\mu^*\left(\frac{n}{k\delta}\right).
\end{equation}

The exponents $e_{\delta}$ are integers that are to be determined.
Given a divisor $\delta$ of $n,$ we let $d$ be the smallest
multiple of $\delta$ that is a block divisor of $n.$ Note that
if $k\delta \mid \mid n,$ then there is an integer $m$ such that
$k\delta=md.$ The condition $k\delta \mid \mid n$ is in general not
equivalent with $k\mid \mid n/\delta,$ however the condition
$md \mid \mid n$ is equivalent with $m\mid \mid n/d.$
Using these observations and \eqref{unit_Mobius} we conclude that
\begin{equation}
\label{edeltavaluation}
e_{\delta}=\sum_{k\delta \mid \mid n}\mu^*\left(\frac{n}{k\delta}\right)=
\sum_{md \mid \mid n}\mu^*\left(\frac{n}{md}\right)=
\sum_{m \mid \mid n/d}\mu^*\left(\frac{n}{md}\right)=\epsilon\left(\frac{n}{d}\right).
\end{equation}
It follows that $e_{\delta}=0,$ except when $n$
is the smallest
multiple of $\delta$ that is a block divisor of $n$
(which occurs if and only if $\kappa(\delta)=\kappa(n)$), in which case
$e_{\delta}=1.$
\end{proof}

\begin{remark} {\rm
An alternative form of
\eqref{Phi_Phi_star} is
\begin{equation}
\label{cycloreduction}
\Phi^*_n(x)=\prod_{d\mid \frac{n}{\kappa(n)}}\Phi_{\kappa(n)}(x^d),
\end{equation}
which is obtained on noting that
$$
\Phi^*_{n}(x)= \prod_{\substack{d\mid n\\ \kappa(d)=\kappa(n)}} \Phi_d(x)=
\prod_{\substack{d \kappa(n) \mid n}} \Phi_{d\kappa(n)}(x)=\prod_{d\mid \frac{n}{\kappa(n)}}\Phi_{\kappa(n)}(x^d),
$$
where in the last step we used repeatedly that $\Phi_{pn}(x)=\Phi_{n}(x^p)$
if $p\mid n.$ }
\end{remark}

\begin{remark}
{\rm Theorem \ref{setequal} says that $\Phi^*_n(x)$ is an inclusion-exclusion polynomial associated to
the prime power factorization of $n$. A formula of Bachman giving
the factorization of an inclusion-exclusion polynomial
in cyclotomic polynomials (Theorem \ref{Thm:Bachman}), then
leads to an alternative proof of Theorem \ref{Th_pol} (Section
\ref{sec:relation}).}
\end{remark}

\begin{remark} {\rm The convolution defined by
\begin{equation*}
(f\diamond g)(n) = \sum_{\substack{d\mid n \\ \kappa(d)=\kappa(n)}} f(d)g(n/d) \qquad (n\in \N)
\end{equation*}
was mentioned by Subbarao \cite{Sub1972} and investigated by Thrimurthy \cite{Thr1977}.
It preserves the multiplicativity of functions, although it is noncommutative and nonassociative. However, as it is easy to check,
for any arithmetic functions $f,g$ and $h$,
\begin{equation} \label{prop_diamond}
(f\diamond g)\diamond h = f\diamond (g*h),
\end{equation}
where $*$ is the Dirichlet convolution. See also the review
MR0480305 (58 \# 478) of \cite{Thr1977}.
}
\end{remark}

Our next theorem generalizes Theorem \ref{Th_pol}. Indeed, Theorem \ref{Th_pol} follows from \eqref{g_star} on
making the choice
$g(n)=\log \Phi_n(x)$ (hence
$f(n)=\log(x^n-1)$) and $g^*(n)=\log \Phi^*_n(x)$.
In addition, with this choice \eqref{g}
yields the identity
\begin{equation*}
\Phi_n(x)= \prod_{\substack{d\mid n\\ \kappa(d)=\kappa(n)}} \Phi^*_d(x)^{\mu(n/d)} \qquad (n\in \N),
\end{equation*}
expressing a cyclotomic in terms of unitary cyclotomics. Note that if $d\mid n$, then $\kappa(d)=\kappa(n)$ holds iff $\kappa(n)\mid d$ iff
$\kappa(n/d)\mid d$.
\begin{theorem} \label{Th_general} Let $g,g^*:\N\to \C$ be arbitrary functions. Put $f(n)=\sum_{d\mid n} g(d).$
Assume that
\begin{equation*}
f(n)= \sum_{d\mid\mid n} g^*(d) \qquad (n\in \N).
\end{equation*}
Then
\begin{equation} \label{g_star}
g^*(n)= \sum_{\substack{d\mid n\\ \kappa(d)=\kappa(n)}} g(d) \qquad (n\in \N)
\end{equation}
and
\begin{equation} \label{g}
g(n)= \sum_{\substack{d\mid n\\ \kappa(d)=\kappa(n)}} g^*(d) \mu(n/d) \qquad (n\in \N).
\end{equation}
\end{theorem}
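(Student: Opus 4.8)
The plan is to establish the two displayed identities in turn: \eqref{g_star} will come from unitary M\"obius inversion combined with the coefficient computation already carried out in the proof of Theorem \ref{Th_pol}, and \eqref{g} will then be deduced from \eqref{g_star} by a second (ordinary) inversion.

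For \eqref{g_star}: by hypothesis $f(n)=\sum_{d\mid\mid n}g^*(d)$ for all $n$, so unitary M\"obius inversion \eqref{unit_Mobius} gives $g^*(n)=\sum_{d\mid\mid n}\mu^*(n/d)f(d)$. Substituting the defining relation $f(d)=\sum_{\delta\mid d}g(\delta)$ and interchanging the order of summation yields $g^*(n)=\sum_{\delta\mid n}e_\delta\,g(\delta)$ with $e_\delta=\sum_{k\delta\mid\mid n}\mu^*(n/(k\delta))$. This is precisely the coefficient \eqref{e_delta} appearing in the proof of Theorem \ref{Th_pol}, and the purely combinatorial argument \eqref{edeltavaluation} applies verbatim (it never uses the specific shape of $g$): letting $d$ be the smallest multiple of $\delta$ that is a unitary divisor of $n$, one has $e_\delta=\epsilon(n/d)$, so $e_\delta=1$ when $\kappa(\delta)=\kappa(n)$ and $e_\delta=0$ otherwise. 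That is exactly \eqref{g_star}.

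For \eqref{g}: I would record \eqref{g_star} as $g^*=g\diamond\1$, with $\1$ the constant-$1$ function and $\diamond$ the convolution of the preceding remark. Convolving on the right by the M\"obius function and using the identity \eqref{prop_diamond} together with $\1*\mu=\epsilon$ gives $g^*\diamond\mu=(g\diamond\1)\diamond\mu=g\diamond(\1*\mu)=g\diamond\epsilon$; and in $(g\diamond\epsilon)(n)=\sum_{d\mid n,\ \kappa(d)=\kappa(n)}g(d)\epsilon(n/d)$ only the term $d=n$ survives, so $g\diamond\epsilon=g$ and hence $g=g^*\diamond\mu$, which is \eqref{g}. A self-contained alternative avoids \eqref{prop_diamond}: by the remark just before the theorem the divisors $d$ of $n$ with $\kappa(d)=\kappa(n)$ are exactly the $d=\kappa(n)e$ with $e\mid n/\kappa(n)$, and for every such $n'=\kappa(n)e$ one still has $\kappa(n')=\kappa(n)$; thus \eqref{g_star} reads $g^*(\kappa(n)N')=\sum_{e\mid N'}g(\kappa(n)e)$ for all $N'\mid N:=n/\kappa(n)$, an ordinary divisor sum in the variable $e$. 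Classical M\"obius inversion then gives $g(n)=\sum_{e\mid N}\mu(N/e)g^*(\kappa(n)e)$, and reindexing $d=\kappa(n)e$ (so $n/d=N/e$) produces \eqref{g}.

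I do not anticipate a real obstacle: unitary M\"obius inversion, the evaluation $e_\delta=\epsilon(n/d)$, and the inversion of a $\diamond$-product are all either already in the excerpt or a routine repetition of it. The one point that needs the usual care, exactly as in Theorem \ref{Th_pol}, is that ``$k\delta\mid\mid n$'' is \emph{not} equivalent to ``$k\mid\mid n/\delta$''; this is what forces the passage to the least unitary-divisor multiple $d$ of $\delta$ before \eqref{unit_Mobius} can be applied, and it has already been handled there, so it can simply be cited.
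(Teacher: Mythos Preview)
Your proposal is correct and follows the paper's own proof essentially verbatim: the derivation of \eqref{g_star} via unitary M\"obius inversion and the coefficient computation \eqref{e_delta}--\eqref{edeltavaluation}, and the derivation of \eqref{g} via $g^*=g\diamond\1$ together with \eqref{prop_diamond} and $\1*\mu=\epsilon$, are exactly what the paper does. Your added self-contained alternative for \eqref{g} (reparametrizing by $d=\kappa(n)e$ and applying ordinary M\"obius inversion in $e$) is a nice bonus not present in the paper.
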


\begin{remark} {\rm In Theorem \ref{Th_general} the function $g$ is multiplicative if and only if $g^*$ is multiplicative.}

\end{remark}
\begin{proof}[Proof of Theorem \ref{Th_general}]
By M\"obius inversion we have
\begin{equation} \label{g_Mobius}
g(n)= \sum_{d\mid n} f(d)\mu(n/d)
\end{equation}
and
\begin{equation} \label{g_star_Mobius}
g^*(n)= \sum_{d\mid\mid n} f(d) \mu^*(n/d).
\end{equation}
These identities show that given $g,$ the function $g^*$ is uniquely
determined and reversely. We have
$$
g^*(n)= \sum_{d\mid \mid n} \mu^*(n/d) \sum_{\delta \mid d} g(\delta) =\sum_{\delta \mid n} g(\delta) e_{\delta},
$$
where $e_{\delta}$ is given by \eqref{e_delta}.
The proof of \eqref{g_star} is now easily completed
on invoking \eqref{edeltavaluation}, cf.\,the proof of Theorem \ref{Th_pol}.

Now we prove identity \eqref{g}. Put $f:=g$, $g:=\1$ (constant $1$ function), $h:=\mu$ in identity \eqref{prop_diamond}. This gives
\begin{equation*}
(g\diamond \1)\diamond \mu = g\diamond (\1*\mu).
\end{equation*}

Here, $g\diamond \1=g^*$ by \eqref{g_star}. Also, $\1*\mu=\epsilon,$ which is a basic property of the classical M\"obius function.
Since the function $\epsilon$ is the identity for the $\diamond$ operation, we
conclude that
\begin{equation*}
g^* \diamond \mu = g,
\end{equation*}
completing the proof.
\end{proof}

\section{Further properties of unitary cyclotomic polynomials}

\subsection{Calculation of $\Phi_n^*(\pm 1)$}
\label{flauweevaluatie}
In this section we determine $\Phi^*_n(\pm 1).$ For
completeness and comparison we mention the analogous classical results
for $\Phi_n(1).$

Let $\Lambda^*$ denote the unitary analog of the von Mangoldt function $\Lambda$. It is given by
\begin{equation}
\label{lambdastar}
\Lambda^*(n) = \begin{cases} a \log p & \text{ if $n=p^a$ is a prime power ($a\ge 1$);}\\ 0 & \text{ otherwise,}
\end{cases}
\end{equation}
and satisfies $\sum_{d \mid\mid n} \Lambda^*(d)=\log n$ (analogous to the classical identity
$\sum_{d \mid n} \Lambda(d)=\log n$).
\begin{lemma}
\label{valueat1A}
We have
$$\Phi_n(1)=\begin{cases}0 & \hbox{if } n=1;\\
p & \hbox{if } n=p^e;\\
1 & \text{otherwise},\end{cases}{\rm ~and~}\Phi^*_n(1)=\begin{cases}0 & \hbox{if } n=1;\\
p^e & \hbox{if } n=p^e;\\
1 & \text{otherwise},\end{cases}$$
with $p$ a prime number and $e\ge 1$.
\end{lemma}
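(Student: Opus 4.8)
The plan is to handle the classical statement about $\Phi_n(1)$ first (it is well known, but I include it for completeness), and then derive the unitary statement as a quick consequence of Theorem \ref{Th_pol}. For the classical part, I would start from \eqref{factor} in the form $x^n-1 = \prod_{d\mid n}\Phi_d(x)$, divide by $x-1 = \Phi_1(x)$, and pass to the limit $x\to 1$ to get $n = \prod_{d\mid n,\ d>1}\Phi_d(1)$. Taking logarithms turns this into $\log n = \sum_{d\mid n,\ d>1}\log\Phi_d(1)$, i.e. $\log n = \sum_{d\mid n}\Lambda^\sharp(d)$ where $\Lambda^\sharp(d) := \log\Phi_d(1)$ for $d>1$ and $\Lambda^\sharp(1):=0$. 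Since $\sum_{d\mid n}\Lambda(d) = \log n$ as well and the Dirichlet convolution has unique inverses, $\Lambda^\sharp = \Lambda$, which gives exactly $\Phi_{p^e}(1) = p$ and $\Phi_n(1) = 1$ for $n$ with at least two distinct prime factors; the case $n=1$ is immediate from $\Phi_1(x)=x-1$. (Alternatively one can invoke \eqref{Phi} and a telescoping argument, but the von Mangoldt route is cleanest since $\Lambda^*$ has already been introduced.)

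For the unitary part, I would apply Theorem \ref{Th_pol}: $\Phi^*_n(x) = \prod_{d\mid n,\ \kappa(d)=\kappa(n)}\Phi_d(x)$. For $n=1$ the only term is $\Phi_1(x)=x-1$, so $\Phi^*_1(1)=0$. For $n>1$, every $d$ appearing in the product satisfies $\kappa(d)=\kappa(n)$, in particular $d>1$, so by the classical result every factor $\Phi_d(1)$ is either $p$ (when $d$ is a prime power) or $1$. If $n = p^e$ has a single prime factor, the divisors $d$ of $n$ with $\kappa(d)=\kappa(n)=p$ are exactly $p, p^2,\ldots,p^e$, each a prime power, so $\Phi^*_{p^e}(1) = \prod_{j=1}^{e}\Phi_{p^j}(1) = p^e$; this also matches \eqref{totaalflauw}, which already tells us $\Phi^*_{p^a}(1) = \lim_{x\to1}(x^{p^a}-1)/(x-1) = p^a$. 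If $n$ has at least two distinct prime factors, then every $d$ with $\kappa(d)=\kappa(n)$ likewise has at least two distinct prime factors, hence is not a prime power, so $\Phi_d(1)=1$ for every factor and $\Phi^*_n(1)=1$.

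The argument has essentially no obstacle: the only point requiring a word of care is the passage to the limit $x\to 1$ in $x^n-1 = \prod_{d\mid n}\Phi_d(x)$ after dividing by $x-1$, which is legitimate because all $\Phi_d$ are polynomials and, for $d>1$, $\Phi_d(x)$ does not vanish at $x=1$ (its roots are primitive $d$-th roots of unity, none equal to $1$). Everything else is bookkeeping with the condition $\kappa(d)=\kappa(n)$. I would also remark in passing that $\Phi^*_n(1) = \lim_{x\to1}(x^n-1)/\prod_{d\mid\mid n,\ d>1}\Phi^*_d(x)$ via \eqref{prod_Phi_star} gives a second, self-contained route that avoids Theorem \ref{Th_pol} entirely, by the same $\Lambda^*$-convolution trick: $\log n = \sum_{d\mid\mid n}\log\Phi^*_d(1)$ for $n>1$, together with $\sum_{d\mid\mid n}\Lambda^*(d)=\log n$ and uniqueness of unitary-convolution inverses, forces $\log\Phi^*_d(1) = \Lambda^*(d)$, which is the claim.
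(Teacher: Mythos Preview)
Your argument is correct. The classical part is essentially the paper's: both derive $n=\prod_{d\mid n,\,d>1}\Phi_d(1)$ from \eqref{factor} and then use uniqueness (the paper phrases it as M\"obius inversion plus verification, you phrase it via $\Lambda$ and uniqueness of convolution inverses; the paper's own Remark after Lemma~\ref{valueat1B} points out that these are two faces of the same coin).

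For the unitary half your primary route differs from the paper's. The paper treats the two cases in parallel: from \eqref{prod_Phi_star} it gets $n=\prod_{d\mid\mid n,\,d>1}\Phi^*_d(1)$, invokes unitary M\"obius inversion for uniqueness, and verifies the stated values satisfy the identity. You instead feed the already-established classical values through Theorem~\ref{Th_pol}, reading off $\Phi^*_n(1)=\prod_{\kappa(d)=\kappa(n)}\Phi_d(1)$. Your approach is a bit quicker once Theorem~\ref{Th_pol} is available and makes the answer $p^e$ visibly the product of $e$ copies of $p$; the paper's approach is self-contained (it does not rely on Theorem~\ref{Th_pol}) and emphasizes the structural parallel between the ordinary and unitary settings. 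Your ``second, self-contained route'' at the end is precisely the paper's proof. One small remark: when you take logarithms you are implicitly using $\Phi_d(1)>0$ for $d>1$; this is true (pair $j$ with $d-j$ in the product over primitive roots, or just note $\Phi_2(1)=2$ and $\varphi(d)$ is even for $d\ge 3$), but is worth a word since otherwise the passage to $\Lambda$ is not quite justified.
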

In terms of the (unitary) von Mangoldt function this  can
be reformulated as follows.
\begin{lemma}
\label{valueat1B}
We have $\Phi_1(1)=0$ and $\Phi^*_1(1)=0$. For $n>1$ we have
$$\Phi_n(1)=e^{\Lambda(n)}{ ~and~}\Phi^*_n(1)=e^{\Lambda^*(n)}.$$
\end{lemma}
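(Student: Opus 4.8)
The plan is to prove the two statements of Lemma~\ref{valueat1B} side by side, directly from the product expansions \eqref{factor} and \eqref{prod_Phi_star} of $x^n-1$ together with the divisor-sum identities $\sum_{d\mid n}\Lambda(d)=\log n$ and $\sum_{d\mid\mid n}\Lambda^*(d)=\log n$ recorded above. I should note at the outset that Lemma~\ref{valueat1B} carries exactly the same content as Lemma~\ref{valueat1A}: unwinding the definitions, $e^{\Lambda(n)}$ equals $p$ when $n=p^e$ and $1$ for all other $n>1$, while $e^{\Lambda^*(n)}$ equals $p^e$ when $n=p^e$ and $1$ otherwise, so one may simply quote Lemma~\ref{valueat1A}. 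The argument below, however, proves \ref{valueat1B} (hence \ref{valueat1A}) directly and in a way that treats the classical and unitary cases uniformly.

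First I would dispose of $n=1$: from \eqref{factor} (or \eqref{prod_Phi_star}) and $\varphi(1)=\varphi^*(1)=1$ one gets $\Phi_1(x)=\Phi^*_1(x)=x-1$, so $\Phi_1(1)=\Phi^*_1(1)=0$. For $n>1$, I would factor the term $\Phi_1(x)=\Phi^*_1(x)=x-1$ out of \eqref{factor} and of \eqref{prod_Phi_star} and cancel it, obtaining the polynomial identity
\begin{equation*}
1+x+\cdots+x^{n-1}=\prod_{d\mid n,\,d>1}\Phi_d(x)=\prod_{d\mid\mid n,\,d>1}\Phi^*_d(x).
\end{equation*}
Setting $x=1$ gives $n=\prod_{d\mid n,\,d>1}\Phi_d(1)=\prod_{d\mid\mid n,\,d>1}\Phi^*_d(1)$. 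Now I would run a strong induction on $n$: assuming $\Phi^*_d(1)=e^{\Lambda^*(d)}$ for all $d$ with $1<d<n$, the last relation yields
\begin{equation*}
\Phi^*_n(1)=\frac{n}{\prod_{d\mid\mid n,\,1<d<n}e^{\Lambda^*(d)}}=n\,\exp\!\Big(\Lambda^*(n)-\sum_{d\mid\mid n}\Lambda^*(d)\Big)=n\,\exp\big(\Lambda^*(n)-\log n\big)=e^{\Lambda^*(n)},
\end{equation*}
using $\Lambda^*(1)=0$ and $\sum_{d\mid\mid n}\Lambda^*(d)=\log n$; the identical computation with $\mid\mid$ replaced by $\mid$ and $\Lambda^*$ by $\Lambda$ gives $\Phi_n(1)=e^{\Lambda(n)}$. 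Equivalently, without induction: the functions $d\mapsto\log\Phi_d(1)$ and $d\mapsto\log\Phi^*_d(1)$ (each set to $0$ at $d=1$) have, respectively, the same divisor-sums and unitary-divisor-sums as $\Lambda$ and $\Lambda^*$, so they coincide by classical Möbius inversion and by unitary Möbius inversion \eqref{unit_Mobius}.

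The only point needing care is the bookkeeping at $d=1$: the factor $x-1$ must be removed before the substitution $x=1$, since $\Phi_1(1)=\Phi^*_1(1)=0$ while $1+x+\cdots+x^{n-1}$ is nonzero there. One should also remark that $\Phi_d(1)$ and $\Phi^*_d(1)$ are positive integers for $d>1$, so that the logarithms make sense; but in the inductive form this is automatic, since each $e^{\Lambda^*(d)}$ is positive and the induction bases are the primes $n=p$, where $\Phi^*_p(1)=\Phi_p(1)=p>0$. Beyond this there is no real obstacle.
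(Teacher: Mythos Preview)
Your proof is correct and follows essentially the same route as the paper. The paper actually proves Lemma~\ref{valueat1A} first (deriving $n=\prod_{d\mid n,\,d>1}\Phi_d(1)$ and its unitary analogue, then appealing to M\"obius inversion for uniqueness) and treats Lemma~\ref{valueat1B} as a reformulation, but the subsequent Remark sketches precisely your direct argument via $\sum_{d\mid n}\Lambda(d)=\log n$ and $\sum_{d\mid\mid n}\Lambda^*(d)=\log n$; your induction is just an unwinding of that M\"obius/unitary-M\"obius inversion step.
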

\begin{proof}[Proof of Lemma \ref{valueat1A}]
From \eqref{factor} and \eqref{prod_Phi_star} we obtain (respectively)
\begin{equation*}
\label{howtrivial!}
\frac{x^n-1}{x-1}=\prod_{d \mid n,~d>1}\Phi_d(x){\rm ~and~}\frac{x^n-1}{x-1}=\prod_{d \mid \mid n,~d>1}\Phi^*_d(x).
\end{equation*}
Thus (respectively)
\begin{equation}
\label{nisprod}
n=\prod_{d \mid n,~d>1}\Phi_d(1){\rm ~and~}n=\prod_{d\mid \mid n,~d>1}\Phi^*_d(1).
\end{equation}
By M\"obius inversion the
latter identities for all
$n>1$ determine $\Phi_m(1)$ and $\Phi^*_m(1)$ uniquely for all $m>1$.
It is thus enough to verify that the formulae claimed for $\Phi_m(1)$
and $\Phi^*_m(1)$ verify \eqref{nisprod}, which is evident.
\end{proof}
\begin{remark} {\rm
It is possible to prove Lemma \ref{valueat1B} with the (unitary) von
Mangoldt function naturally appearing in the proof. To do so one proceeds as in
the proof of Lemma  \ref{valueat1A} and deduces \eqref{nisprod} and
concludes that these equations uniquely determine $\Phi_m(1)$ and $\Phi^*_m(1)$.
It remains then (after taking logarithms) to prove the well-known (trivial) identity
$\log n=\sum_{d \mid n,~d>1}\Lambda(d)=\sum_{d \mid n}\Lambda(d),$ and
likewise in the unitary case,
$\log n=\sum_{d \mid \mid n,~d>1}\Lambda^*(d)=\sum_{d \mid \mid n}\Lambda^*(d).$}
\end{remark}

It is not much more difficult to evaluate $\Phi_n^*(-1).$

\begin{lemma}
\label{values_minus_1}
We have
$$\Phi^*_n(-1)=\begin{cases} -2 & \hbox{ if } n=1;\\ 0 & \hbox{ if } n=2^a; \\
p^b & \hbox{ if } n=2^a p^b;\\
1 & \text{ otherwise},\end{cases}$$
with $p$ an odd prime and $a,b\ge 1$.
\end{lemma}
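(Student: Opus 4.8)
The plan is to read off the values $\Phi^*_n(-1)$ from the already-computed values $\Phi^*_n(1)$ of Lemma \ref{valueat1A}, using only the two structural identities \eqref{2n} and \eqref{pkn}. First I would dispose of $n=1$: from \eqref{prod_Phi_star} one has $\Phi^*_1(x)=x-1$, hence $\Phi^*_1(-1)=-2$. From here on $n>1$, and I would split into $n$ odd and $n$ even.

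For $n$ odd (so $n>1$ odd), identity \eqref{2n} gives $\Phi^*_{2n}(x)=\Phi^*_n(-x)$; setting $x=1$ yields $\Phi^*_n(-1)=\Phi^*_{2n}(1)$. Since $2n$ is divisible both by $2$ and by an odd prime, it is not a prime power, so Lemma \ref{valueat1A} gives $\Phi^*_{2n}(1)=1$. Thus $\Phi^*_n(-1)=1$ for every odd $n>1$, which is exactly the ``otherwise'' value in this case. I would record the by-product that $\Phi^*_m(-1)\ne 0$ for every odd $m\ge 1$ (it equals $-2$ for $m=1$ and $1$ for odd $m>1$), since this is needed below.

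For $n$ even I would write $n=2^a m$ with $a\ge 1$ and $m$ odd, and apply \eqref{pkn} with the prime $2$ (note $2\nmid m$) and exponent $a$, in the cleared form $\Phi^*_n(x)\,\Phi^*_m(x)=\Phi^*_m\big(x^{2^a}\big)$. Evaluating at $x=-1$ and using $(-1)^{2^a}=1$ gives $\Phi^*_n(-1)\,\Phi^*_m(-1)=\Phi^*_m(1)$; since $\Phi^*_m(-1)\ne 0$ by the odd case, we obtain $\Phi^*_n(-1)=\Phi^*_m(1)/\Phi^*_m(-1)$. Now plugging in the values from Lemma \ref{valueat1A} and the odd case: $\Phi^*_m(1)$ is $0$, $p^b$, or $1$ according as $m=1$, $m=p^b$, or $m$ is an odd non-prime-power $>1$, while $\Phi^*_m(-1)$ is $-2$ for $m=1$ and $1$ for $m>1$. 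This yields $\Phi^*_n(-1)=0$ when $n=2^a$, $\Phi^*_n(-1)=p^b$ when $n=2^a p^b$ with $p$ odd, and $\Phi^*_n(-1)=1$ when $m$ is an odd non-prime-power $>1$, matching the remaining clauses.

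I do not anticipate a genuine obstacle here; the only two points needing care are (i) that the division in the even case is licit, i.e.\ $\Phi^*_m(-1)\ne 0$ for $m$ odd --- which is exactly why the odd case is settled first --- and (ii) matching the three possible outputs $\{0,\,p^b,\,1\}$ of the ratio against the four clauses of the statement. One could instead imitate the M\"obius-inversion argument used for Lemma \ref{valueat1A}, starting from $(-1)^n-1=\prod_{d\mid\mid n}\Phi^*_d(-1)$, but the sign of $(-1)^n-1$ makes the resulting induction less transparent, so I would prefer the route above.
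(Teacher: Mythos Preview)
Your argument is correct. The odd case via \eqref{2n} and the even case via the cleared form of \eqref{pkn} at $x=-1$ both go through cleanly, and you handled the only delicate point (nonvanishing of $\Phi^*_m(-1)$ for odd $m$) by treating the odd case first.

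Your route is genuinely different from the paper's. The paper argues through the factorization \eqref{cycloreduction}, which writes $\Phi^*_n(x)$ as a product of classical cyclotomics $\Phi_{\kappa(n)}(x^d)$, and then appeals to the known values of $\Phi_n(\pm 1)$ for classical cyclotomic polynomials (Lemma~\ref{valueat1A} and the formula \eqref{negative} for $\Phi_n(-1)$). In contrast, you stay entirely inside the unitary theory, using only the elementary recurrences \eqref{2n} and \eqref{pkn} from Section~\ref{basicunitary} together with the already-established values $\Phi^*_n(1)$. Your approach is more self-contained and does not rely on Theorem~\ref{Th_pol} or its consequence \eqref{cycloreduction}; the paper's approach, on the other hand, illustrates how the bridge to classical cyclotomics lets one import classical evaluations wholesale, which is a theme it returns to elsewhere (e.g.\ at other roots of unity).
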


\begin{proof} Follows from the identity
\eqref{cycloreduction}, Lemma \ref{valueat1A} and the well-known result
\begin{equation}
\label{negative}
\Phi_n(-1)=\begin{cases}-2 & \hbox{ if } n=1;\\ 0 & \hbox{ if } n=2;\\
p & \hbox{if } n=2p^e;\\
1 & \hbox{otherwise,}\end{cases}
\end{equation}
with $p\ge 2$ a prime number and $e\ge 1$.

Assume that $\Phi^*_n(-1)\ne 1.$ By \eqref{cycloreduction} it follows
that $\Phi^*_n(-1)=\Phi_{\kappa(n)}(-1)^e \Phi_{\kappa(n)}(1)^f,$
for some integers $e,f\ge 0.$ The formulas for $\Phi_n(\pm 1)$ then
show that $\kappa(n)|2p,$ with $p$ an odd prime, and so
$n=2^ap^b,$ $a,b\ge 0.$ In case $a,b\ge 1,$ we have
$\Phi^*_n(-1)=\Phi_{2p}(-1)^b \Phi_{2p}(1)^{ab-b}=
\Phi_{2p}(-1)^b=p^b,$
by, respectively, \eqref{cycloreduction}, Lemma \ref{valueat1A} and \eqref{negative}.
The remaining cases are left to the reader.
\end{proof}

Using identity \eqref{cycloreduction} one can likewise immediately
evaluate $\Phi^*_n(1)$ from $\Phi_n(1)$.

\subsubsection{Some products involving the cos and sin functions}
It is known that for any $n\ge 2$,
\begin{equation*} 
\prod_{\substack{j=1\\ (j,n)=1}}^n \sin \frac{\pi j}{n} = \frac{\Phi_n(1)}{2^{\varphi(n)}},
\end{equation*}
\begin{equation} \label{prod_cos}
\prod_{\substack{j=1\\ (j,n)=1}}^n \cos \frac{\pi j}{n} = \frac{\Phi_n(-1)}{(-4)^{\varphi(n)/2}},
\end{equation}
proved in \cite{DS1987} (for \eqref{prod_cos} in case $n$ is odd only) and \cite{Ten2005} (for any $n\ge 2$) by two different methods, see also \cite{LW2009}. Here we
provide the unitary analogs of these products, which 
in combination with the results of the previous section allow one to explicitly evaluate them.

\begin{thm} For any $n\ge 2$,
\begin{equation} \label{prod_sin_star}
\prod_{\substack{j=1\\ (j,n)_*=1}}^n \sin \frac{\pi j}{n} = \frac{\Phi^*_n(1)}{2^{\varphi^*(n)}},
\end{equation}
\begin{equation} \label{prod_cos_star}
\prod_{\substack{j=1\\ (j,n)_*=1}}^n \cos \frac{\pi j}{n} = \frac{\Phi^*_n(-1)}{(-4)^{\varphi^*(n)/2}}.
\end{equation}
\end{thm}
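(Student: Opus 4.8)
The plan is to mirror the classical derivations of \eqref{prod_cos} and its sine companion from \cite{DS1987,Ten2005}, replacing the set $\{1\le j\le n:(j,n)=1\}$ by $T:=\{j:1\le j\le n,\ (j,n)_*=1\}$ (so $|T|=\varphi^*(n)$) and $\Phi_n$ by $\Phi^*_n$ throughout. By the definition \eqref{def_unit_cyclotomic} we have $\Phi^*_n(1)=\prod_{j\in T}(1-\zeta_n^j)$ and $\Phi^*_n(-1)=\prod_{j\in T}(-1-\zeta_n^j)$, and the starting point is the elementary factorizations
\begin{equation*}
1-\zeta_n^j=-2i\,e^{\pi i j/n}\sin\frac{\pi j}{n},\qquad -1-\zeta_n^j=-2\,e^{\pi i j/n}\cos\frac{\pi j}{n},
\end{equation*}
obtained by pulling out $e^{\pi i j/n}$ and using $e^{i\theta}-e^{-i\theta}=2i\sin\theta$ and $e^{i\theta}+e^{-i\theta}=2\cos\theta$.

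Taking the product over $j\in T$ gives $\Phi^*_n(1)=(-2i)^{\varphi^*(n)}\,E\,\prod_{j\in T}\sin(\pi j/n)$ and $\Phi^*_n(-1)=(-2)^{\varphi^*(n)}\,E\,\prod_{j\in T}\cos(\pi j/n)$, where $E:=\prod_{j\in T}e^{\pi i j/n}$. The key auxiliary step is to show $E=i^{\varphi^*(n)}$. For this I would first record that $T$ is symmetric under $j\mapsto n-j$: indeed $(j,n)_*=1$ means $p^a\nmid j$ for every prime power $p^a\|n$, and since $p^a\mid n$ this is equivalent to $p^a\nmid n-j$. Pairing $j$ with $n-j$ (the only possible fixed point being $j=n/2$, contributing $n/2$ when it lies in $T$) yields $\sum_{j\in T}j=\tfrac12 n\varphi^*(n)$ in all cases, and hence $E=e^{\pi i\cdot\frac12\varphi^*(n)}=i^{\varphi^*(n)}$.

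Substituting $E=i^{\varphi^*(n)}$ gives $\Phi^*_n(1)=(-2i\cdot i)^{\varphi^*(n)}\prod_{j\in T}\sin(\pi j/n)=2^{\varphi^*(n)}\prod_{j\in T}\sin(\pi j/n)$, which is \eqref{prod_sin_star}, and $\Phi^*_n(-1)=(-2i)^{\varphi^*(n)}\prod_{j\in T}\cos(\pi j/n)$. Since $(-2i)^2=-4$, the last exponential rewrites as $(-4)^{\varphi^*(n)/2}$ as soon as $\varphi^*(n)$ is even, which yields \eqref{prod_cos_star}.

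The one point that needs a little care — and the only (mild) obstacle — is the parity of $\varphi^*(n)=\prod_{p^a\|n}(p^a-1)$: it is odd exactly when $n=2^a$, and then the exponent $\varphi^*(n)/2$ in \eqref{prod_cos_star} is not an integer. But these are precisely the values for which $\Phi^*_n(-1)=0$ by Lemma \ref{values_minus_1}, while simultaneously $n/2\in T$ contributes the factor $\cos(\pi/2)=0$ on the left, so \eqref{prod_cos_star} holds as $0=0$; for every other $n\ge 2$ the exponent is an integer and the computation above applies verbatim. Identity \eqref{prod_sin_star} requires no such case distinction, since $\Phi^*_n(1)\neq 0$ for all $n\ge 2$.
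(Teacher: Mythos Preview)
Your proof is correct and follows essentially the same route as the paper's own argument: both factor $\pm 1-\zeta_n^j$ by pulling out $e^{\pi i j/n}=\zeta_n^{j/2}$, both invoke the identity $\sum_{(j,n)_*=1}j=\tfrac12 n\varphi^*(n)$ (the paper states it as \eqref{sum_j_star} without proof; you supply the symmetry argument $j\mapsto n-j$), and both dispose of the case $n=2^a$ for \eqref{prod_cos_star} by observing that each side vanishes. The only cosmetic difference is that the paper writes the intermediate products in terms of $\zeta_n^{j/2}$ rather than $e^{\pi i j/n}$, which amounts to the same computation.
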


\begin{proof} We adapt the approach in \cite{Ten2005} to the unitary case. We need the simple formula
\begin{equation} \label{sum_j_star}
S^*(n):= \sum_{\substack{j=1\\ (j,n)_*=1}}^n j = \frac{n\varphi^*(n)}{2} \quad (n\ge 2),
\end{equation}
which can be shown similarly to the usual case.
We will only prove \eqref{prod_cos_star}, the proof of \eqref{prod_sin_star}
being similar. The product in the left hand side of \eqref{prod_cos_star} we
denote by $P^*(n).$

If $n=2^a,$ $a\ge 1,$ then we note that $(2^{a-1},2^a)_*=1$ and so the product
in \eqref{prod_cos_star} is zero. By Lemma \ref{values_minus_1} it follows
that also $\Phi_n^*(-1)$ is zero and thus in this case \eqref{prod_cos_star} holds.
Therefore we may assume that $n$ has an odd prime factor, which implies that $\varphi^*(n)$ is
even.
By \eqref{def_unit_cyclotomic} and \eqref{sum_j_star} we then see that
\begin{eqnarray*}
\Phi^*_n(-1) & =&\prod_{(j,n)_*=1} \left(-1 - \zeta_n^j \right)
= \prod_{(j,n)_*=1} \left(-\zeta_n^{j/2}
\right)\left(\zeta_n^{j/2}+\zeta_n^{-j/2}\right)\\
& =& 2^{\varphi^*(n)}P^*(n)\prod_{(j,n)_*=1} \left(-\zeta_n^{j/2}
\right)=(-2)^{\varphi^*(n)}\zeta_n^{S^*(n)/2}P^*(n)\\
& = & (-2)^{\varphi^*(n)} \zeta_n^{n\varphi^*(n)/4}P^*(n)
= (-2i)^{\varphi^*(n)}P^*(n),\\
& = & (-4)^{\varphi^*(n)/2}P^*(n),
\end{eqnarray*}
completing the proof of \eqref{prod_cos_star}.
\end{proof}

\begin{remark} {\rm
A completely similar argument leads to a proof of \eqref{prod_cos}.
The argument in that case is even easier, as $\varphi(n)$ is even for $n\ge 3$ and
it is not necessary to deal with the powers of two separately.}
\end{remark}

\subsubsection{Calculation of $\Phi_n^*$ at other roots of unity}
It is known how to explicitly evaluate $\Phi_n(\zeta_m)$
for $m\in \{3,4,5,6,8,10,12\},$ see \cite{BHMaa}. This in
combination with identity \eqref{cycloreduction} then allows one
to evaluate $\Phi_n^*(\zeta_m)$ for these values of
$m.$

\subsection{Unitary version of Schramm's identity}
In this section $x$ will
be a real variable. It was proved by Schramm \cite{Sch2015} that
\begin{equation*} 
\Phi_n(x)= \prod_{j=1}^n \left(x^{(j,n)}-1 \right)^{\cos(2\pi j/n)} \quad (x>1,\,n\in \N).
\end{equation*}

We will prove the following unitary analog.

\begin{theorem}
\label{form}
We have
\begin{equation*}
\Phi_n^*(x)= \prod_{j=1}^n \left(x^{(j,n)_*}-1 \right)^{\cos(2\pi j/n)} \quad (x>1,\,n\in \N).
\end{equation*}
\end{theorem}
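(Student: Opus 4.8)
The plan is to mimic the standard proof of Schramm's original identity, replacing the Dirichlet-convolution objects $\mu$, $\gcd(j,n)$ and $c_m(1)$ by their unitary counterparts $\mu^*$, $(j,n)_*$ and $c^*_m(1)$, and to reduce everything to the factorization \eqref{Phi_star} together with the evaluation $c^*_m(1)=\mu^*(m)$ recorded after \eqref{unit_c_2}. Since $x>1$, every factor $x^d-1$ occurring is a positive real, a real power of a positive real is again a positive real, and by the second equality in \eqref{Phi_star} the value $\Phi^*_n(x)=\prod_{d\mid\mid n}(x^d-1)^{\mu^*(n/d)}$ is a positive real; hence both sides of the asserted identity are positive and it suffices to prove the equivalent logarithmic identity
\begin{equation*}
\sum_{d\mid\mid n}\mu^*(n/d)\log(x^d-1)= \sum_{j=1}^n \cos\!\Bigl(\tfrac{2\pi j}{n}\Bigr)\log\bigl(x^{(j,n)_*}-1\bigr)\qquad(x>1),
\end{equation*}
whose left-hand side is $\log\Phi^*_n(x)$ by \eqref{Phi_star}.

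On the right I would collect the terms according to the value $d:=(j,n)_*$, which is always a unitary divisor of $n$. The crucial elementary step is the description of the fibre: writing $j=dm$, the condition $(j,n)_*=d$ becomes, because $d$ and $n/d$ are coprime ``blocks'' of $n$ (so that $de\mid dm\iff e\mid m$ whenever $e\mid\mid n/d$), exactly the pair of conditions $1\le m\le n/d$ and $(m,n/d)_*=1$; moreover $2\pi j/n=2\pi m/(n/d)$. Consequently, for each $d\mid\mid n$,
\begin{equation*}
\sum_{\substack{1\le j\le n\\ (j,n)_*=d}}\cos\!\Bigl(\tfrac{2\pi j}{n}\Bigr)= \sum_{\substack{1\le m\le n/d\\ (m,n/d)_*=1}}\cos\!\Bigl(\tfrac{2\pi m}{n/d}\Bigr)= \RE c^*_{n/d}(1)= \mu^*(n/d),
\end{equation*}
where in the last two steps I use that $c^*_m(1)=\sum_{(j,m)_*=1}\zeta_m^j$ and that $c^*_m(1)=\mu^*(m)$ is a (real) integer. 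Substituting this back, the right-hand side of the logarithmic identity collapses to $\sum_{d\mid\mid n}\mu^*(n/d)\log(x^d-1)$, which matches the left-hand side; exponentiating finishes the proof, with no case analysis needed since \eqref{Phi_star} and the evaluation $c^*_m(1)=\mu^*(m)$ are valid for all $n$ and $m$.

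I expect the only genuine obstacle to be the fibre description, i.e.\ the claim that $\{1\le j\le n:(j,n)_*=d\}$ corresponds bijectively to $\{1\le m\le n/d:(m,n/d)_*=1\}$ via $j=dm$. This rests on the block structure of unitary divisors recalled in the introduction: for a prime power $p^a\mid\mid n$ one has $p^a\mid (j,n)_*$ iff $p^a\mid j$, so $(j,n)_*=d$ forces $d\mid j$, and among the multiples $j=dm$ of $d$ the value $(j,n)_*$ equals $d$ precisely when no larger block of $n$ divides $j$, which by the coprimality of $d$ and $n/d$ is exactly the condition $(m,n/d)_*=1$. Once this is in place the remainder is the same bookkeeping that proves the classical Schramm identity — with $\sum_{(k,m)=1}\cos(2\pi k/m)=c_m(1)=\mu(m)$ replaced by the unitary Ramanujan sum evaluation — and the positivity of $\Phi^*_n(x)$ for $x>1$ needed to legitimize taking logarithms is immediate from \eqref{Phi_star}.
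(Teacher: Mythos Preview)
Your proof is correct and rests on the same core manipulation the paper uses: collecting the terms of $\sum_{j}\cos(2\pi j/n)\log(x^{(j,n)_*}-1)$ according to the value $d=(j,n)_*$, using the bijection $j\leftrightarrow dm$ to reduce the inner sum to $c^*_{n/d}(1)=\mu^*(n/d)$, and recognising the result as $\log\Phi^*_n(x)$ via \eqref{Phi_star}. The only difference is packaging: the paper first proves the general DFT identities \eqref{firstidentity}--\eqref{secondidentity} for $F^*_f(m,n)$ and then specialises (formally via the \eqref{firstidentity}/\eqref{cospart} route) to $f(n)=\log(x^n-1)$ and $m=1$, whereas you carry out directly the $m=1$ instance of the grouping argument that the paper uses to prove \eqref{secondidentity}.
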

This is, in fact, a corollary of a more general identity concerning the discrete Fourier transform (DFT)
\begin{align} \label{def_F*}
F^*_f(m,n):= \sum_{k=1}^n f((k,n)_*) \zeta_n^{km}
\end{align}
of functions involving the quantity $(k,n)_*$.

\begin{theorem} 
Let $f$ be an arbitrary arithmetic function. For every $m,n\ge 1$,
\begin{equation}
    \label{firstidentity}
F^*_f(m,n) = \sum_{d\mid (m,n)_*} d\, (\mu^* \times f)(n/d).
\end{equation}
Furthermore, we have
\begin{equation}
\label{secondidentity}
F^*_f(m,n)=\sum_{d\mid\mid n} f(d) c_{n/d}^*(m).
\end{equation}
\end{theorem}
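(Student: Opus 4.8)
The plan is to prove the two identities \eqref{firstidentity} and \eqref{secondidentity} directly from the definition \eqref{def_F*}, using the unitary analogs of standard Ramanujan-sum/Fourier arguments. For \eqref{secondidentity}, first I would split the sum over $k$ according to the value of $\delta = (k,n)_*$; since $\delta \mid\mid n$, the $k$ with $(k,n)_*=\delta$ are exactly those of the form $k = \delta j$ with $1 \le j \le n/\delta$ and $(j, n/\delta)_* = 1$. This requires the multiplicative/arithmetic fact that $(k,n)_* = \delta$ with $\delta \mid\mid n$ forces $\delta \mid k$ and $(k/\delta, n/\delta)_* = 1$; I would note this follows from the prime-power description of unitary divisors in the introduction. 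Then
\begin{equation*}
F^*_f(m,n) = \sum_{\delta \mid\mid n} f(\delta) \sum_{\substack{1 \le j \le n/\delta \\ (j, n/\delta)_* = 1}} \zeta_n^{\delta j m} = \sum_{\delta \mid\mid n} f(\delta) \sum_{\substack{1 \le j \le n/\delta \\ (j, n/\delta)_* = 1}} \zeta_{n/\delta}^{j m},
\end{equation*}
and the inner sum is precisely $c^*_{n/\delta}(m)$ by Cohen's definition, giving \eqref{secondidentity}.

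Next, for \eqref{firstidentity} I would start from \eqref{secondidentity} and substitute the closed form \eqref{unit_c_1} for the unitary Ramanujan sum, namely $c^*_{n/\delta}(m) = \sum_{d \mid\mid (m, n/\delta)_*} d\,\mu^*(n/(\delta d))$. Interchanging the order of summation (summing over $d$ first) and reindexing, I expect to collapse the $\delta$-sum into a unitary convolution: writing $n = d \cdot e$ with $d$ ranging appropriately, the coefficient of $d$ becomes $\sum_{\delta} f(\delta)\,\mu^*(e/\delta)$ where $\delta$ runs over unitary divisors of $e = n/d$, which is exactly $(\mu^* \times f)(n/d)$ by the definition of the unitary convolution. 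The constraint $d \mid\mid (m, n/\delta)_*$ together with $\delta \mid\mid n$ should, after careful bookkeeping, reduce to $d \mid (m,n)_*$ governing the outer sum. This is the step I expect to be the main obstacle: the double divisibility conditions $\delta \mid\mid n$, $d \mid\mid (m, n/\delta)_*$ do not factor as cleanly as their gcd counterparts, so I would verify the reindexing prime-by-prime (each prime power $p^a \| n$ contributes independently), checking that the exponent conditions on $p$ in $\delta$, $d$, and $m$ match up on both sides.

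Alternatively — and this may be cleaner — one can prove \eqref{firstidentity} first and independently, then derive \eqref{secondidentity} from it. For \eqref{firstidentity}, write $f = \mu^* \times g$ where $g = \1 \times f$ (i.e. $g(n) = \sum_{d\mid\mid n} f(d)$, using that $\mu^*$ is the unitary-convolution inverse of $\1$), so $f((k,n)_*) = \sum_{d \mid\mid (k,n)_*} (\mu^* \times f)(d)$... hmm, more directly: expand $f((k,n)_*) = \sum_{e \mid\mid (k,n)_*}(\mu^*\times f)(e)\cdot(\text{something})$ is not quite right. Instead I would use the orthogonality relation \eqref{unit_c_2}: since $\sum_{d \mid\mid N} c^*_d(m) = N$ if $N\mid m$ and $0$ otherwise, the functions $k \mapsto c^*_d(k)$ form a natural basis, and expanding $f((k,n)_*)$ in this basis recovers exactly \eqref{secondidentity}; then plugging in \eqref{unit_c_1} and regrouping gives \eqref{firstidentity} as above. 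Either route reduces to the same combinatorial reindexing.

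Finally, Theorem \ref{form} follows by taking $f(d) = \log(x^d - 1)$ for $x > 1$ (so that each factor is a well-defined positive real and the logarithm is real), and $m$ ranging so that $\zeta_n^{km}$ averages to $\cos(2\pi k/n)$ — concretely, one takes real parts, using that $\sum_{(k,n)_*=1}$-type sums pair $k$ with $n-k$. More precisely, I would compute $\sum_{k=1}^n \log(x^{(k,n)_*}-1)\cos(2\pi k/n) = \RE\, F^*_f(1,n)$, apply \eqref{secondidentity} to get $\sum_{d\mid\mid n} \log(x^d-1)\, c^*_d(1) = \sum_{d\mid\mid n} \log(x^d-1)\,\mu^*(d)$ (using $c^*_d(1)=\mu^*(d)$), and recognize the right-hand side as $\log \Phi^*_n(x)$ by \eqref{Phi_star}; exponentiating yields Theorem \ref{form}. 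The only subtlety here is confirming the imaginary part of $F^*_f(1,n)$ vanishes, which again follows from the $k \leftrightarrow n-k$ symmetry of the condition defining the level sets of $(k,n)_*$ together with $\sin$ being odd.
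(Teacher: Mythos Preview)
Your proof of \eqref{secondidentity} is exactly the paper's: group the $k$-sum by the value $\delta=(k,n)_*$ and recognise the inner sum as $c^*_{n/\delta}(m)$.

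For \eqref{firstidentity} the paper is more direct than either of your routes, and in fact uses precisely the step you wrote down and then dismissed. Since $\mu^*$ is the unitary inverse of the constant $1$ function, for any $N$ one has $f(N)=\sum_{d\mid\mid N}(\mu^*\times f)(d)$; applying this with $N=(k,n)_*$ gives
\[
F^*_f(m,n)=\sum_{k=1}^n \zeta_n^{km}\sum_{d\mid\mid (k,n)_*}(\mu^*\times f)(d).
\]
Now use (as in the paper's derivation of \eqref{Phi_star}) that $d\mid\mid (k,n)_*$ is equivalent to $d\mid k$ and $d\mid\mid n$, swap the sums, and evaluate the resulting geometric sum $\sum_{j=1}^{n/d}\zeta_{n/d}^{jm}$, which is $n/d$ if $(n/d)\mid m$ and $0$ otherwise. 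Relabelling $d\mapsto n/d$ and noting that $d\mid\mid n,\ d\mid m$ is exactly $d\mid (m,n)_*$ gives \eqref{firstidentity}. So the ``something'' you were missing is simply $1$.

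Your alternative---derive \eqref{firstidentity} from \eqref{secondidentity} via \eqref{unit_c_1}---also works, and the reindexing you flagged as the obstacle is not hard: the conditions $\delta\mid\mid n$ and $d\mid\mid(n/\delta)$ are equivalent to $d\mid\mid n$ and $\delta\mid\mid(n/d)$ (check prime by prime), whereupon the inner $\delta$-sum collapses to $(\mu^*\times f)(n/d)$. The paper in fact remarks on the converse direction (obtaining \eqref{secondidentity} from \eqref{firstidentity}) right after the proof.

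Your final paragraph on Theorem~\ref{form} lies outside the statement you were asked to prove; note also a small slip there: from \eqref{secondidentity} with $m=1$ one gets $\sum_{d\mid\mid n}\log(x^d-1)\,c^*_{n/d}(1)=\sum_{d\mid\mid n}\log(x^d-1)\,\mu^*(n/d)$, not $c^*_d(1)=\mu^*(d)$.
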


\begin{proof} We have by using that $d\mid\mid (k,n)_*$ if and only if $d\mid k$ and $d\mid \mid n$,
\begin{equation*}
F^*_f(m,n)= \sum_{k=1}^n \zeta_n^{km} \sum_{d\mid\mid (k,n)_*} (\mu^* \times f)(d)
\end{equation*}
\begin{equation*}
= \sum_{d\mid\mid  n} (\mu^* \times f)(d) \sum_{j=1}^{n/d} \zeta_{n/d}^{jm}
= \sum_{\substack{d\mid\mid n\\ (n/d) \mid m}} (\mu^* \times f)(d) \frac{n}{d},
\end{equation*}
which proves \eqref{firstidentity}.

By grouping the terms according to the values of $(k,n)_*=d,$ we have
$$F^*_f(m,n)=\sum_{d\mid \mid n}f(d)\sum_{\substack{r=1\\ (r,n/d)_*=1}}^{r=n/d}\zeta_{n/d}^{rm}=\sum_{d\mid n}f(d)c_{n/d}^*(m),$$
which proves \eqref{secondidentity},
\end{proof}
\begin{remark}
{\rm By \eqref{unit_c_1} and \eqref{unit_c_2} the unitary Ramanujan sum
satisfies
$c_{\DOT}^*(m)= \varrho_{\DOT}(m) \times \mu^*$. Thus, the sum in \eqref{firstidentity} equals
$(\varrho_{\DOT}(m)\times \mu^* \times f)(n)= (c_{\DOT}^*(m) \times f)(n),$
leading to another proof of \eqref{secondidentity}.}
\end{remark}

\begin{remark} {\rm Identity \eqref{firstidentity} shows that if $f$ is a real valued function, then
so is $F^*_f(m,n)$ and hence in this case
the factor $\zeta_n^{km}$ in \eqref{def_F*}
can be replaced by $\cos(2\pi km/n)$. More exactly, if $f$ is a real valued function, then
\begin{align}
\label{cospart}
\sum_{k=1}^n f((k,n)_*) \cos(2\pi km/n) &= \sum_{d\mid (m,n)_*} d\, (\mu^* \times f)(n/d),\\
\sum_{k=1}^n f((k,n)_*) \sin(2\pi km/n) & = 0.\nonumber
\end{align}
In the special case $f(n)=n$ ($n\in \N$) and $m=1$ we obtain the following identities:
$$
\sum_{k=1}^n (k,n)_* \cos(2\pi k/n) =
n\sum_{d\mid \mid n}\frac{\mu^*(d)}{d}=\varphi^*(n)
\text{~and~}\sum_{k=1}^n (k,n)_* \sin(2\pi k/n)= 0.$$
In the classical case where $(k,n)_*$ is replaced by $(k,n)$ and $\varphi^*(n)$ is replaced
by $\varphi(n)$, these were
pointed out by Schramm \cite{Sch2008,Sch2015}.}
\end{remark}

\begin{proof}[Proof of Theorem \ref{form}]
By taking $f(n)=\log (x^n-1)$ we have by
\eqref{Phi_star} that
\begin{align*}
(\mu^* \times f)(n)= \sum_{d\mid \mid n} \mu^*(d) \log (x^{n/d}-1) =\log \Phi^*_n(x).
\end{align*}
The assumption that $x>1$ ensures that $f$ is real.
It then follows from \eqref{cospart} that
\begin{align*} 
\prod_{j=1}^n \left(x^{(j,n)_*}-1 \right)^{\cos(2\pi jm/n)} = \prod_{d\mid\mid (m,n)_*} \Phi^*_{n/d}(x)^d   \quad (m,n\in \N).
\end{align*}
The proof is completed
on putting $m=1$.
\end{proof}

\section{The coefficients of unitary cyclotomic polynomials}
\noindent We write
\begin{equation}
    \label{coeffdef}
\Phi_n^*(x)
=\sum_{j=0}^{\infty}a_n^*(j)x^j.
\end{equation}
This notation looks perhaps strange to the reader, but implicitly
defines the coefficients for every $j,$ which serves our purposes. In \cite{MMT} the following result is proven.
\begin{theorem}
\label{main}
Let $m\ge 1$ be fixed.
We have
$\{a^*_{mn}(j):n\ge 1,~j\ge 0\}=\mathbb Z.$
\end{theorem}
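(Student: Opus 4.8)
The plan is to reduce the statement to a question about ordinary cyclotomic polynomials via the factorization \eqref{cycloreduction}, and then to invoke Suzuki's theorem and refinements of it, to the effect that cyclotomic polynomials with sufficiently many prime factors realise arbitrary prescribed integer coefficients. First I would cut down the range of $n$: since enlarging the family over which one reads off coefficients can only enlarge the left-hand set, it suffices to treat $n$ squarefree and coprime to $m$. For such $n$ one has $\kappa(mn)=\kappa(m)\,n$, hence $mn/\kappa(mn)=m/\kappa(m)=:m^{\flat}$ depends only on $m$. Writing $q:=\kappa(m)$ and letting $1=d_1<d_2<\dots<d_{\tau}$ be the divisors of $m^{\flat}$, identity \eqref{cycloreduction} becomes
\begin{equation*}
\Phi^*_{mn}(x)=\prod_{i=1}^{\tau}\Phi_{qn}\!\bigl(x^{d_i}\bigr),
\end{equation*}
a product over a \emph{fixed} finite index set of dilations of the single cyclotomic polynomial $\Phi_{qn}$, whose index $qn$ is squarefree and has $\omega(q)+\omega(n)$ prime factors, with $\omega(n)$ at our disposal. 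In particular, since each factor $\Phi_{qn}(x^{d_i})$ with $i\ge 2$ is $\equiv 1\pmod{x^{d_2}}$, one already has $a^*_{mn}(j)=[x^j]\Phi_{qn}(x)$ for $0\le j<d_2$; in general the goal is to choose $n$ so that $\Phi_{qn}$ has a single prescribed ``large'' coefficient with controlled neighbouring coefficients, so that the convolution produced by the dilations $x\mapsto x^{d_i}$ adds only a bounded, explicitly computable correction to the corresponding coefficient of $\Phi^*_{mn}$.

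The second ingredient is input from the theory of coefficients of cyclotomic polynomials. When $q=1$ — which covers $m$ squarefree with at most one prime factor, via $\Phi_{2N}(x)=\Phi_N(-x)$ for odd $N$ — this is precisely Suzuki's theorem: for $t\ge 3$ and primes $p_1<\dots<p_t$ with $p_1+p_2>p_t$ the coefficient of $x^{p_t}$ in $\Phi_{p_1\cdots p_t}$ equals $1-t$, which together with the trivial values $0,\pm1$ and the sign flip above exhausts $\mathbb Z$. For general $q$ one needs the analogue in which the fixed prime factors of $q$ are forced into the index while the extremal coefficient still ranges over all of $\mathbb Z$ as the number of additional, suitably chosen large primes grows, together with the fact that $\Phi_{qn}$ has coefficients bounded independently of the primes themselves throughout the relevant range of degrees. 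Granting this, one plugs the resulting $\Phi_{qn}$ into the displayed identity, reads off the coefficient isolated in the previous step, and obtains $a^*_{mn}(j)=C$ for any prescribed $C\in\mathbb Z$; as every $a^*_{mn}(j)$ is an integer, this yields the claimed equality.

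The hard part is the interplay of the two constraints. Suzuki's inequality $p_1+p_2>p_t$ fails the moment the index is forced to contain two small primes — which happens as soon as $\kappa(m)$ has two small prime factors, e.g. $6\mid m$ — so it must be replaced by a construction tolerating the fixed ``core'' $q$; and when $m^{\flat}$ is even (that is, $4\mid m$) the smallest dilation is $x\mapsto x^{2}$, which interferes with the low-degree coefficients at once, so the target value cannot simply be read off a degree $j<d_2$. Both difficulties are met by placing the large cyclotomic coefficient at a degree, and in a residue class modulo the $d_i$, for which exactly one term of the convolution $\prod_i\Phi_{qn}(x^{d_i})$ is large while the rest sum to a bounded quantity. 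Establishing the flexible Suzuki-type statement for a fixed core $q$, and carrying out this bookkeeping, is where the real work lies.
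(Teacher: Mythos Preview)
The paper does not prove this theorem; the sentence introducing it reads ``In [MMT] the following result is proven,'' and no argument appears. There is thus nothing in the present text to compare your proposal against, and I can only assess it on its own terms.

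Your reduction via identity \eqref{cycloreduction} is correct and natural: for $n$ squarefree and coprime to $m$ one has $\Phi^*_{mn}(x)=\prod_{d\mid m^{\flat}}\Phi_{qn}(x^{d})$ with $q=\kappa(m)$ and $m^{\flat}=m/q$, a product over a fixed finite set of dilations of a single cyclotomic polynomial. There is a slip in the discussion that follows: ``$q=1$'' is equivalent to $m=1$, not to ``$m$ squarefree with at most one prime factor''; presumably you intended $m^{\flat}=1$, i.e.\ $m$ squarefree, where the product collapses to the single factor $\Phi_{mn}(x)$. Even in that case the assertion that Suzuki's construction together with the sign flip $\Phi_{2N}(x)=\Phi_N(-x)$ settles the matter is too quick. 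The inequality $p_1+p_2>p_t$ is incompatible with forcing a fixed small odd prime such as $3$ into the index while still letting $t\to\infty$, so already for $m=3$ one needs precisely the ``flexible Suzuki-type statement for a fixed core $q$'' that you yourself flag as unproved.

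That is the genuine gap. You correctly isolate both obstructions --- the prescribed small primes in $q$ and the convolution arising when $m^{\flat}>1$ --- and you describe in qualitative terms how one would hope to overcome them, but you supply neither the required coefficient theorem for $\Phi_{qn}$ with fixed core $q$ nor the bookkeeping that isolates a single coefficient of the product. As written this is an honest research outline rather than a proof.
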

Given any polynomial $f,$ its \emph{height} $h(f)$
is defined as its
maximum coefficient in absolute value.
\begin{Con}
\label{cycloheight}
For any given natural number $m$ there is a
cyclotomic polynomial having height $m.$
\end{Con}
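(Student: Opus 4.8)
The plan is to realise every positive integer $m$ as the height of a \emph{ternary} cyclotomic polynomial $\Phi_{pqr}$, with $p<q<r$ primes, by exploiting the well developed combinatorial theory of the coefficients of such polynomials (Bang, Beiter, Bloom, Lam--Leung, Kaplan, Bachman, Bzdega, Gallot--Moree). Binary cyclotomic polynomials $\Phi_{pq}$ are flat ($h=1$) and hence useless; ternary ones are the first genuinely flexible family, and although higher-order polynomials also produce large coefficients --- indeed every integer occurs as \emph{a} coefficient of some $\Phi_n$, by a theorem of Suzuki --- pinning the \emph{maximal} coefficient down to an exact prescribed value seems much harder there than in the ternary case.

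First I would invoke the standard reduction. By self-reciprocity it suffices to understand roughly the first half of the coefficients $a_{pqr}(k)$, and Kaplan's lemma writes each such $a_{pqr}(k)$ explicitly as a difference of two coefficients of the binary polynomial $\Phi_{pq}$, the indices being determined by $k$ and the inverse of $r$ modulo $pq$. From this one reads off that $h(\Phi_{pqr})$ depends only on $p$ and the residue classes of $q$ and $r$ modulo $p$, once $q,r$ are large compared with $p$. Moreover, passing from residues back to primes costs nothing: by Dirichlet's theorem, for any prime $p$ and any $a,b\in(\Z/p\Z)^{\times}$ there are infinitely many prime pairs $q\equiv a$, $r\equiv b\pmod p$ with $p<q<r$, and one may impose in addition any convenient congruence on $r$ modulo $q$. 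The conjecture therefore reduces to the purely arithmetic assertion that $\bigcup_{p}\mathcal H(p)=\N$, where $\mathcal H(p)=\{H(p,a,b):a,b\in(\Z/p\Z)^{\times}\}$ and $H(p,a,b)$ is the height attached to the residue datum $(a,b)$.

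To prove $\bigcup_{p}\mathcal H(p)=\N$ --- the heart of the matter --- one has on one side Bang's bound $h(\Phi_{pqr})\le p-1$, improved towards $2p/3$ by Bzdega, which forces $p\gtrsim\tfrac32 m$ if the value $m$ is to be reached; and on the other side Kaplan's flat family ($H=1$ when $r\equiv\pm1\pmod{pq}$) together with the constructions of Bachman and of Gallot--Moree, which produce long ranges of intermediate values of $H(p,a,b)$. The delicate point, and the step I expect to be the main obstacle, is that for a \emph{single} prime $p$ the set $\mathcal H(p)$ is not known to be an initial segment $\{1,2,\dots\}$; one must instead show that as $p$ runs over the primes in a window around $\tfrac32 m$ --- say $p\in[\tfrac32 m,\,Cm]$ for an absolute constant $C$, a window guaranteed to contain a prime by Bertrand's postulate --- the \emph{union} of the sets $\mathcal H(p)$ already contains $m$, the extra flexibility coming from the varying residue systems. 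Making this rigorous means distilling from the rather intricate coefficient formulas a clean parametrised family whose values provably leave no gaps, and it is precisely here that a quantitative input from prime number theory is unavoidable: Bertrand's postulate if a window of length $\asymp m$ suffices, and a genuine short-interval estimate (possibly for primes in short intervals within arithmetic progressions) if the window has to be taken narrower.

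Since the structural results above are asymptotic and apply only for $m$ sufficiently large, the finitely many remaining small values of $m$ would finally be settled by direct computation, exhibiting in each case an explicit $n$ with $h(\Phi_n)=m$ (for instance $h(\Phi_{105})=2$).
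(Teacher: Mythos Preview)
The statement you are attempting to prove is labelled \textbf{Conjecture} in the paper, not Theorem, and the paper does not prove it. It is explicitly attributed to Kosyak et al.\ as an open problem, and the strongest result the paper records in its direction is Theorem~\ref{thm:main2}: \emph{almost all} positive integers occur as the height of some cyclotomic polynomial, with at most $\ll_\epsilon x^{3/5+\epsilon}$ exceptions up to $x$. That partial result already relies on deep short-interval prime gap estimates, and the paper makes no claim that the full conjecture is known.

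Your proposal is a strategic outline rather than a proof, and you yourself flag the gap: you need $\bigcup_p \mathcal H(p)=\N$, but you only know that each $\mathcal H(p)$ is contained in $\{1,\dots,p-1\}$ (Bang/Bzd\c{e}ga) and contains some explicit values (Kaplan, Bachman, Gallot--Moree). The sentence ``Making this rigorous means distilling from the rather intricate coefficient formulas a clean parametrised family whose values provably leave no gaps'' is precisely the missing step, and it is not a technicality --- this is the content of the conjecture. The approach you sketch (ternary $\Phi_{pqr}$, Kaplan's lemma, residue reduction, primes in short intervals) is exactly the machinery behind the Kosyak--Moree--Sofos--Zhang partial result quoted as Theorem~\ref{thm:main2}, and that machinery, pushed as far as current prime-gap technology allows, yields only the almost-all statement, not the full conjecture. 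So your plan does not close the gap; it rediscovers the known route and stops at the known obstacle.
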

This conjecture was put forward by
Kosyak et al.\,\cite{KMSZ}. Here we propose the
following conjecture.
\begin{Con}
\label{unitarycycloheight}
For any given natural number $m$ there is a
unitary cyclotomic polynomial having height $m.$
\end{Con}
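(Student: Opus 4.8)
The plan is to deduce the conjecture from the corresponding statement for ordinary cyclotomic polynomials. Suppose Conjecture \ref{cycloheight} holds, so that for a given $m\ge 1$ there is a squarefree-free value is not needed—there is simply some $d\ge 1$ with $h(\Phi_d)=m$. The key observation is that the class of unitary cyclotomic polynomials \emph{contains} a natural supply of ordinary cyclotomic polynomials and, more usefully, polynomials built from them by the substitution $x\mapsto x^{p}$. Indeed, by \eqref{cycloreduction} we have $\Phi^*_n(x)=\prod_{d\mid n/\kappa(n)}\Phi_{\kappa(n)}(x^d)$, so whenever $n$ is chosen with $\kappa(n)=d_0$ for a fixed $d_0$ and $n/\kappa(n)$ prime to $d_0$... but this introduces extra factors. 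A cleaner route: if $n$ is squarefree then $\Phi^*_n=\Phi_n$, so every squarefree-indexed cyclotomic is already a unitary cyclotomic, and if the $d$ realizing height $m$ happens to be squarefree we are done immediately. The hard part is the non-squarefree case.

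First I would record that it suffices to realize, as a unitary cyclotomic, any polynomial of the form $\Phi_d(x^{p^k})$ with $p\nmid d$: by \eqref{pkn} and the structure in Section \ref{basicunitary}, or more directly by \eqref{cycloreduction}, one has $\Phi^*_{p^{k+1}\kappa}(x)=\prod_{j=0}^{k}\Phi_{\kappa}(x^{p^{j}})$ when $\kappa$ is squarefree and $p\nmid\kappa$, which is a product and not a single factor, so this still does not isolate one $\Phi_d(x^{p^k})$. So the naive substitution idea does not directly give a single cyclotomic as a unitary cyclotomic. Instead I would try to use the flexibility in Theorem \ref{main}: that theorem already shows every integer appears as \emph{some} coefficient $a^*_{mn}(j)$, which is much stronger information about which integers are hit, but says nothing about the \emph{maximal absolute} coefficient. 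The natural strategy is therefore to revisit the construction behind Theorem \ref{main} (in \cite{MMT}) and check whether, in the family of $n$ producing a prescribed coefficient value, one can additionally arrange that no coefficient of larger absolute value appears—i.e. convert an ``appears somewhere'' statement into a ``is the height'' statement.

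The key steps, in order, would be: (1) reduce to Conjecture \ref{cycloheight}, treating the squarefree case trivially via $\Phi^*_n=\Phi_n$; (2) for a given $m$, pick $d$ (which we may take squarefree if Conjecture \ref{cycloheight} can be arranged with squarefree index—this is a subsidiary question worth isolating) with $h(\Phi_d)=m$; (3) if $d$ is not squarefree, use $\Phi_d(x)=\Phi_{\kappa(d)}(x^{d/\kappa(d)})$ to reduce to a squarefree kernel $e=\kappa(d)$ with $h(\Phi_e(x^{d/e}))=h(\Phi_e)$, and then seek a unitary cyclotomic whose only relevant ``block'' is $\Phi_e(x^{\ell})$; (4) invoke \eqref{cycloreduction} to express a candidate $\Phi^*_n$ as $\prod_{t\mid n/e}\Phi_e(x^t)$ and estimate its height, hoping the $t=\ell$ term dominates or that the product's height is controlled by the individual factors. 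The main obstacle I expect is precisely step (4): the height of a product of cyclotomic-type factors is hard to control, and $\prod_{t\mid n/e}\Phi_e(x^t)$ generally has height much larger than $h(\Phi_e)$; so one likely cannot hit arbitrary $m$ this way, and one instead needs a genuinely new construction—most plausibly an adaptation of the argument in \cite{MMT} that tracks the maximal coefficient, not merely the set of attained values. That adaptation, rather than any formal deduction from Conjecture \ref{cycloheight}, is where the real work lies, and it is the reason this is stated as a conjecture rather than a theorem.
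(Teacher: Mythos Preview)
First, a clarification: the statement you are addressing is a \emph{conjecture}, and the paper does not prove it. What the paper does prove is the Proposition immediately following it, namely the conditional implication ``Conjecture \ref{cycloheight} $\Rightarrow$ Conjecture \ref{unitarycycloheight}.'' Your proposal is also, at bottom, an attempt at this conditional implication (you begin ``Suppose Conjecture \ref{cycloheight} holds''), so the appropriate comparison is with the paper's two-line proof of that Proposition.

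You actually write down the decisive observation and then walk past it. In your step (3) you note that $\Phi_d(x)=\Phi_{\kappa(d)}(x^{d/\kappa(d)})$, which immediately gives $h(\Phi_{\kappa(d)})=h(\Phi_d)=m$ since the substitution $x\mapsto x^{\ell}$ preserves the set of nonzero coefficients and hence the height. But $e=\kappa(d)$ is squarefree, so $\Phi^*_e=\Phi_e$ and therefore $h(\Phi^*_e)=m$. That is the entire argument; this is exactly what the paper does. There is no ``hard non-squarefree case,'' no need to look inside \cite{MMT}, and no need for your step (4) with its product $\prod_{t\mid n/e}\Phi_e(x^t)$ whose height you correctly worry you cannot control. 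The gap in your write-up is not a wrong idea but a failure to recognize that the reduction to the squarefree kernel already finishes the job: once $h(\Phi_{\kappa(d)})=m$, the unitary cyclotomic you want is simply $\Phi^*_{\kappa(d)}$.
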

These two conjectures are closely
connected.
\begin{prop}
If Conjecture \ref{cycloheight} is true, then so
is Conjecture \ref{unitarycycloheight}.
\end{prop}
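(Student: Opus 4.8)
The plan is to exploit the explicit product formula from Theorem~\ref{Th_pol}, namely $\Phi^*_n(x)=\prod_{d\mid n,\ \kappa(d)=\kappa(n)}\Phi_d(x)$, and, more to the point, its reduced form \eqref{cycloreduction}: $\Phi^*_n(x)=\prod_{d\mid n/\kappa(n)}\Phi_{\kappa(n)}(x^d)$. The key observation is that when $n=\kappa(n)$ is squarefree, this product is empty apart from the single factor $d=1$, so $\Phi^*_n(x)=\Phi_{\kappa(n)}(x)=\Phi_n(x)$, and in particular $h(\Phi^*_n)=h(\Phi_n)$. Thus every cyclotomic polynomial with a squarefree index is \emph{itself} a unitary cyclotomic polynomial.

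So the first (and essentially only) step I would carry out is: given $m$, assume Conjecture~\ref{cycloheight} and pick a cyclotomic polynomial $\Phi_N(x)$ with $h(\Phi_N)=m$. If $N$ is already squarefree we are immediately done by the remark above, since $\Phi^*_N=\Phi_N$. If $N$ is not squarefree, I would replace it by its squarefree kernel $\kappa(N)$; but this changes the polynomial, so one needs the standard fact that $h(\Phi_N)=h(\Phi_{\kappa(N)})$. This is classical: writing $N=\kappa(N)\cdot t$ and using repeatedly the identity $\Phi_{pn}(x)=\Phi_n(x^p)$ for $p\mid n$ (already invoked in the Remark following Theorem~\ref{Th_pol}), one gets $\Phi_N(x)=\Phi_{\kappa(N)}(x^t)$, and substituting $x\mapsto x^t$ merely spreads the coefficients out over higher powers of $x$ without altering their multiset — hence the heights agree. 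Therefore $h(\Phi^*_{\kappa(N)})=h(\Phi_{\kappa(N)})=h(\Phi_N)=m$, and $\Phi^*_{\kappa(N)}$ is the desired unitary cyclotomic polynomial.

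I do not anticipate a serious obstacle here: the argument is a one-line reduction once the identity $\Phi_N(x)=\Phi_{\kappa(N)}(x^t)$ is in hand, and that identity is already used earlier in the paper. The only point requiring a moment's care is the claim that passing from $\Phi_{\kappa(N)}(x)$ to $\Phi_{\kappa(N)}(x^t)$ preserves the height — but this is obvious because the coefficients of $f(x^t)$ are exactly the coefficients of $f(x)$, just placed at indices multiplied by $t$. One could equally phrase the whole thing without invoking $\kappa$: whatever $\Phi_N$ with height $m$ Conjecture~\ref{cycloheight} provides, the polynomial $\Phi^*_{\kappa(N)}(x)$ is a unitary cyclotomic polynomial with the same height, which is precisely what Conjecture~\ref{unitarycycloheight} asserts. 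Hence Conjecture~\ref{cycloheight} implies Conjecture~\ref{unitarycycloheight}.
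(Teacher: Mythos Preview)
Your proof is correct and follows essentially the same route as the paper: pick $\Phi_N$ of height $m$, reduce to the squarefree kernel via $\Phi_N(x)=\Phi_{\kappa(N)}(x^{N/\kappa(N)})$ so that $h(\Phi_{\kappa(N)})=m$, and then observe $\Phi^*_{\kappa(N)}=\Phi_{\kappa(N)}$. The paper's version is terser (it simply appeals to ``elementary properties of cyclotomic polynomials'' for the height-preservation step), but the argument is the same.
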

\begin{proof}
Suppose that $h(\Phi_n)=m.$ Then, by elementary
properties of cyclotomic polynomials,
$h(\Phi_{\kappa(n)})=m.$ Now note that
$h(\Phi^*_{\kappa(n)})=h(\Phi_{\kappa(n)})$
(since $\Phi^*_{\kappa(n)}=\Phi_{\kappa(n)}$).
\end{proof}
The best result available to date regarding these two conjectures is the following.
\begin{thm}
\label{thm:main2}
Almost all positive integers occur
as the height of an (unitary) cyclotomic polynomial. Specifically, for any fixed $\epsilon>0$,
the number of positive
integers $\le x$ that do not occur
as a height is $\ll_{\epsilon} x^{3/5+\epsilon}.$ Under the Lindel\"of Hypothesis this number
is $\ll_{\epsilon} x^{1/2+\epsilon}.$
\end{thm}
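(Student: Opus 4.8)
The plan is to dispose of the word ``unitary'' at once, reduce to the classical ternary case, and then confront the cyclotomic-with-prescribed-height construction of Kosyak et al.\,\cite{KMSZ} (compare \cite{MMT}) with the sharpest available bounds for primes in short intervals and in arithmetic progressions. The unitary reduction costs nothing. Iterating $\Phi_{pm}(x)=\Phi_m(x^p)$ (valid for $p\mid m$) gives $\Phi_n(x)=\Phi_{\kappa(n)}\bigl(x^{n/\kappa(n)}\bigr)$, hence $h(\Phi_n)=h(\Phi_{\kappa(n)})$; and $\kappa(n)$ being squarefree, $\Phi^*_{\kappa(n)}=\Phi_{\kappa(n)}$. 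So any integer of the form $h(\Phi_n)$ is also of the form $h(\Phi^*_N)$ with $N=\kappa(n)$ squarefree, and the set of integers $\le x$ that are \emph{not} a (unitary) cyclotomic height is contained in $\mathcal E(x):=\{m\le x:\ m\ne h(\Phi_n)\text{ for all }n\}$. Since $h(\Phi_n)=h(\Phi_{\kappa(n)})$, it suffices to realize almost every $m$ by a \emph{squarefree} index, and the plan is to use indices $n=p_1p_2p_3$ with three distinct prime factors, for which the height is explicitly understood.

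Here I would invoke the theory of ternary heights: for primes $p<q<r$ the value $h(\Phi_{pqr})$ is an explicit arithmetic function of $p$ and of the residues of $q$ and of $r$ to suitable moduli --- by Kaplan's lemma it depends on $r$ only via $\pm r\bmod pq$, and through $\Phi_{pqr}(x)=\Phi_{pq}(x^r)/\Phi_{pq}(x)$ it is controlled by the coefficients of $1/\Phi_{pq}$ over a window fixed by $r\bmod pq$, so for $r$ in a given class it is a concrete function of $p$ and $q\bmod p$ (Bachman, Gallot--Moree, Bzd\k{e}ga, Zhang, et al.). The realization lemma to be quoted from \cite{KMSZ} reads, roughly: to a target $m$ one attaches a prime $p=p(m)$ --- in fact a short interval $I(m)$ of admissible such primes --- a residue $a\bmod p$, and a residue $b\bmod pq$, such that \emph{every} prime triple $(p,q,r)$ with $p\in I(m)$, $q\equiv a\pmod p$ in a prescribed dyadic range, and $r\equiv b\pmod{pq}$ below a bound that is a fixed power of $m$, satisfies $h(\Phi_{pqr})=m$. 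The classical bound $h(\Phi_{pqr})\le p$ forces $p\ge m$, so $p\asymp m$ (for efficiency) and the modulus $pq$ is $\gg m^{1+\epsilon}$. Thus $m\in\mathcal E(x)$ forces that for \emph{every} $p\in I(m)$ one of two prime-existence statements fails: a prime $q\equiv a\pmod p$ in the short range, or a prime $r\equiv b\pmod{pq}$ below the bound.

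The exceptional count is the crux, and produces the two exponents. The $r$-requirement is a prime in a progression to a modulus $pq\gg m^{1+\epsilon}$, beyond the Bombieri--Vinogradov range; but for almost all residue classes to a given modulus the least prime there is $\ll (pq)^{1+\epsilon}$, and a large-sieve second-moment argument over the occurring moduli confines the $m\le x$ for which the $r$-requirement fails --- a potential Siegel modulus included --- to a set of size $\ll_\epsilon x^{1/2+\epsilon}$. The $q$-requirement is the harder one: a prime in a short interval inside a fixed residue class to a modulus $\asymp m$, which (for exceptional $m$) must fail for \emph{every} $p\in I(m)$. Using the freedom in $I(m)$ one recasts this as positivity of an averaged count of primes over short intervals and bounds the offending $m\le x$ by the Huxley zero-density estimate for $\zeta(s)$ combined with the Hoheisel--Ingham argument, getting $\ll_\epsilon x^{3/5+\epsilon}$. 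Summing, $\#\mathcal E(x)\ll_\epsilon x^{3/5+\epsilon}$. Under the Lindel\"of Hypothesis the zero-density input is replaced by the existence of a prime in every $[y,y+y^{1/2+\epsilon}]$ and by equidistribution essentially to the square-root barrier, so both contributions drop to $\ll_\epsilon x^{1/2+\epsilon}$.

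The main obstacle is this last step. One cannot simply cite a pointwise ``primes in short intervals'' theorem: the unconditional Baker--Harman--Pintz exponent $0.525$ would make the construction succeed for \emph{all} large $m$, hence leave no exceptional set, so the realization lemma must be run with genuinely short ranges --- and it is then the averaged, zero-density regime, not the pointwise one, that pins down the calibration $p(m)\asymp m$ and the exponent $3/5$ (resp.\ $1/2$ under Lindel\"of). Moreover, since $p\asymp m$ pushes $pq$ out of the Bombieri--Vinogradov range, the $r$-requirement resists any soft equidistribution argument and must be handled by a large-sieve/second-moment estimate over the moduli; the genuine work is the bookkeeping that balances the sizes of $p,q,r$ against the lengths of $I(m)$ and of the $q$-range so that both exceptional sets are simultaneously $\ll x^{3/5+\epsilon}$ (resp.\ $\ll x^{1/2+\epsilon}$).
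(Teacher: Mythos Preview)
Your reduction from the unitary to the classical case is correct and is essentially the paper's argument, though the paper states it even more tersely: since \cite[Theorem~4]{KMSZ} already realizes almost all heights by \emph{ternary} cyclotomics $\Phi_{pqr}$ with $p<q<r$ prime, and $pqr$ is squarefree so $\Phi^*_{pqr}=\Phi_{pqr}$, the unitary statement is immediate. Your route via $h(\Phi_n)=h(\Phi_{\kappa(n)})$ and $\Phi^*_{\kappa(n)}=\Phi_{\kappa(n)}$ reaches the same conclusion with a slightly longer detour.

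Where you diverge from the paper is everything after that reduction. The paper treats \cite[Theorem~4]{KMSZ} as a black box and stops --- indeed, it explicitly flags this as the one genuinely deep result in the article, resting on prime-gap input. You instead attempt to reconstruct the KMSZ argument (the ternary height machinery, Kaplan's lemma, a prime-existence scheme for $p,q,r$, Huxley zero-density for the short-interval step, a large-sieve second moment for the progression step, and the Lindel\"of improvement). That outline is in the right spirit, but it is explicitly a sketch (``Here I would invoke'', ``reads, roughly'', ``to be quoted'') rather than a proof, and pinning down that the exponents $3/5$ and $1/2$ arise for exactly the reasons you name would require going into \cite{KMSZ} itself. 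For the purposes of the present paper none of this is needed: the intended proof is the two-line deduction from the cited theorem, and your additional material, while informative, is commentary on \cite{KMSZ} rather than part of the argument here.
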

\begin{proof}
The result is actually a corollary of
\cite[Theorem 4]{KMSZ}. In that theorem only
certain special cyclotomic polynomials of the form $\Phi_{pqr},$
with $p<q<r$ primes, feature. As $\Phi^*_{pqr}=\Phi_{pqr},$
we are done.
\end{proof}
This theorem is deep, as it relies on deep results
from analytic number theory on gaps between
consecutive primes.

Let $k$ be a squarefree integer. Consider
the set
\begin{equation}
\label{bk}
\mathcal B(k):=\{h(\Phi^*_n):\kappa(n)=k\}.
\end{equation}
Note that if we replace $h(\Phi^*_n)$ by
$h(\Phi_n)$ this set will be $\{h(\Phi_k)\}.$
\begin{lemma}
Let $k$ be a squarefree integer.
Suppose that $k$ has at most two distinct prime factors.
Then ${\mathcal B}(k)=\{1\}.$
\end{lemma}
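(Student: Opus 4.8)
The plan is to split according to the number of distinct prime factors of $k$. If $k=1$ then necessarily $n=1$ and $\Phi^*_1(x)=x-1$ has height $1$. If $k=p$ is prime then $n=p^a$ for some $a\ge 1$ and, by \eqref{totaalflauw}, $\Phi^*_{p^a}(x)=(x^{p^a}-1)/(x-1)=1+x+\cdots+x^{p^a-1}$, which again has height $1$. So it remains to treat the case $k=pq$ with $p<q$ primes, where $n=p^aq^b$ with $a,b\ge 1$.

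For such $n$ I would first extract a closed form for $\Phi^*_n$. The unitary divisors of $p^aq^b$ are precisely $1,\ p^a,\ q^b,\ p^aq^b$, so \eqref{prod_Phi_star} together with \eqref{totaalflauw} yields
\[
x^{p^aq^b}-1=(x-1)\cdot\frac{x^{p^a}-1}{x-1}\cdot\frac{x^{q^b}-1}{x-1}\cdot\Phi^*_{p^aq^b}(x),
\]
whence, on writing $P=p^a$ and $Q=q^b$ (so that $(P,Q)=1$),
\[
\Phi^*_{p^aq^b}(x)=\frac{(x^{PQ}-1)(x-1)}{(x^{P}-1)(x^{Q}-1)}=(1-x)\Bigl(\sum_{i=0}^{Q-1}x^{iP}\Bigr)\Bigl(\sum_{j\ge 0}x^{jQ}\Bigr),
\]
the last equality being read in the ring of formal power series $\Z[[x]]$.

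Next I would expand $A(x):=\bigl(\sum_{i=0}^{Q-1}x^{iP}\bigr)\bigl(\sum_{j\ge 0}x^{jQ}\bigr)=\sum_{m\ge 0}c_m x^m$, so that $c_m$ is the number of pairs $(i,j)$ with $0\le i\le Q-1$, $j\ge 0$ and $iP+jQ=m$. The key step — which is essentially the classical argument that the cyclotomic polynomials $\Phi_{pq}$ have coefficients in $\{0,\pm 1\}$ — is to check that $c_m\in\{0,1\}$ for every $m$: if $iP+jQ=i'P+j'Q$ with $0\le i,i'<Q$, then $Q\mid(i-i')P$, and $(P,Q)=1$ forces $i=i'$, hence $j=j'$. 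Consequently the coefficients of $(1-x)A(x)$ are the differences $c_m-c_{m-1}\in\{-1,0,1\}$, so $h(\Phi^*_{p^aq^b})\le 1$; since $\Phi^*_{p^aq^b}$ is monic of degree $\varphi^*(p^aq^b)=(P-1)(Q-1)\ge 1$, this forces $h(\Phi^*_{p^aq^b})=1$. Combining the three cases, every $n$ with $\kappa(n)=k$ satisfies $h(\Phi^*_n)=1$, i.e.\ $\mathcal B(k)=\{1\}$.

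I do not expect a genuine obstacle; the only point needing a little care is that the geometric-series manipulation takes place in $\Z[[x]]$, the passage back to an honest polynomial being legitimate because $\Phi^*_n$ is already known to be a polynomial of degree $\varphi^*(n)$.
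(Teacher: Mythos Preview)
Your proof is correct. The paper's own argument for the two-prime case is less hands-on: it invokes Theorem~\ref{setequal} to identify $\Phi^*_{p^aq^b}$ with the binary inclusion-exclusion polynomial $Q_{\{p^a,q^b\}}$, and then cites the fact that such a polynomial is the semigroup polynomial of the numerical semigroup $\langle p^a,q^b\rangle$, which is known to have height~$1$ (alternatively it points to Lemma~4 of \cite{DM}). Your route instead derives the closed form $\frac{(x^{PQ}-1)(x-1)}{(x^P-1)(x^Q-1)}$ directly from the unitary divisor structure and then runs the classical $\Phi_{pq}$ flatness argument with $p,q$ replaced by the coprime pair $P=p^a$, $Q=q^b$. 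This is more elementary and entirely self-contained---it does not need Theorem~\ref{setequal} or any external reference---and is in spirit exactly the ``shorter, but less conceptual'' alternative the paper alludes to. The paper's approach, on the other hand, explains \emph{why} the result holds by placing it in the inclusion-exclusion/numerical-semigroup framework.
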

\begin{proof} If $k$ is a prime power, the conclusion follows from \eqref{totaalflauw}. 
If $k$ has precisely two distinct prime
factors, say $p$ and $q,$ we can write $\Phi^*_n(x)=Q_{\{p^e,q^f\}}(x)$ by Theorem
\ref{setequal}. The polynomial $Q_{\{p^e,q^f\}}(x)$ can be interpreted as
the semigroup polynomial associated to the numerical semigroup $\langle p^e,q^f\rangle$ and
as such will have height 1
(see Jones et al.\,\cite{MMT} or Moree \cite{Mor2014}). A shorter, but less conceptual, proof is obtained on
merely invoking Lemma 4 of \cite{DM}.
\end{proof}
\begin{remark} {\rm 
By the above proof and \eqref{2n} we also have
$h(\Phi^*_{2p^eq^f})=1.$ However, it is not always
true that $h(\Phi^*_{4p^eq^f})=1$ 
(for example $h(\Phi^*_{60})=2$).}
\end{remark}
Computer work by Bin Zhang suggests that
$\mathcal B(k)$ will be large if $k$ has at least three prime factors. This suggests that perhaps there is hope of
proving a stronger result on heights assumed by unitary cyclotomic polynomials
than is provided by Theorem \ref{thm:main2}.
\begin{question}
Suppose that $k$ has at least three odd prime factors.
Is $\mathcal B(k)$ unbounded?
\end{question}
If $k$ has four or more prime factors, we would not be surprised if
$\mathcal B(k)$ is unbounded. If it has precisely three factors, the
situation is not so clear. For example, 
$$\max\{h(\Phi^{*}_{n}): n= 2^a 3^b 5^c, ~a>0,~b>0,~c>0,~1<n<10^7\}=15.$$

\section{More on inclusion-exclusion polynomials}

The following result gives the
factorization of an inclusion-exclusion
polynomial $Q_{\rho}(x)$ into cyclotomics.
\begin{thm}[Bachman, \cite{Bac2010}]
\label{Thm:Bachman}
Let $\rho=\{r_1,r_2,\ldots,r_s\}$ be a set of increasing natural numbers satisfying
$r_i>1$ and $(r_i,r_j)=1$ for $i\ne j$.
Put $$D_{\rho}=\{d:d|\prod_{i=1}^{s} r_i\text{~and~}(d,r_i)>1\text{~for~all~}i\}.$$
Then $Q_{\rho}(x)=\prod_{d\in D_{\rho}}\Phi_d(x)$.
\end{thm}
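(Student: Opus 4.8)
The plan is to expand $Q_{\rho}(x)$ by means of \eqref{factor} and read off the exponent with which each cyclotomic $\Phi_d(x)$ occurs. First I would rewrite \eqref{Qformula} in the compact form
$Q_{\rho}(x)=\prod_{I\subseteq\{1,\ldots,s\}}(x^{n_I}-1)^{(-1)^{|I|}}$,
where for $I=\{i_1<\cdots<i_t\}$ we put $n_I=n_0/(r_{i_1}\cdots r_{i_t})$ (so $n_{\emptyset}=n_0$); this matches \eqref{Qformula} because the numerator there collects exactly the even-cardinality index sets $I$ and the denominator the odd-cardinality ones. Substituting $x^{n_I}-1=\prod_{d\mid n_I}\Phi_d(x)$ from \eqref{factor} yields $Q_{\rho}(x)=\prod_{d\mid n_0}\Phi_d(x)^{e_d}$ with $e_d=\sum_{I:\,d\mid n_I}(-1)^{|I|}$.

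Next I would pin down, for a fixed $d\mid n_0$, which index sets $I$ actually contribute. Since the $r_i$ are pairwise coprime, $n_I=\prod_{i\notin I}r_i$, so $d\mid n_I$ holds if and only if every prime dividing $d$ divides some $r_i$ with $i\notin I$. Writing $S(d)=\{i:(d,r_i)>1\}$, this is equivalent to $I\cap S(d)=\emptyset$, i.e. $I\subseteq\{1,\ldots,s\}\setminus S(d)$. Hence
$e_d=\sum_{I\subseteq\{1,\ldots,s\}\setminus S(d)}(-1)^{|I|}=\sum_{k=0}^{\,s-|S(d)|}\binom{s-|S(d)|}{k}(-1)^k=(1-1)^{\,s-|S(d)|}$.

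Finally, this is $0$ whenever $|S(d)|<s$ and equals $1$ when $|S(d)|=s$; and $|S(d)|=s$ means precisely that $(d,r_i)>1$ for every $i$, i.e. $d\in D_{\rho}$. Therefore $Q_{\rho}(x)=\prod_{d\in D_{\rho}}\Phi_d(x)$, which as a byproduct reconfirms that $Q_{\rho}\in\mathbb Z[x]$ since all the exponents $e_d$ turned out to be non-negative. There is no serious obstacle here beyond careful bookkeeping: the one step that genuinely needs the hypotheses is the translation ``$d\mid n_I\iff I\cap S(d)=\emptyset$'', which uses the pairwise coprimality of the $r_i$ essentially (it would fail if the $r_i$ shared prime factors), together with getting the sign convention in \eqref{Qformula} right so that even/odd index sets really do land in the numerator/denominator.
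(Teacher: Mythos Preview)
Your argument is correct. The compact rewriting $Q_{\rho}(x)=\prod_{I\subseteq\{1,\ldots,s\}}(x^{n_I}-1)^{(-1)^{|I|}}$ matches \eqref{Qformula}, the substitution via \eqref{factor} is fine, and the key step $d\mid n_I\Longleftrightarrow I\cap S(d)=\emptyset$ is valid because $d\mid n_0$ together with pairwise coprimality of the $r_i$ ensures that for each prime $p\mid d$ the full $p$-part of $d$ divides the unique $r_i$ containing $p$; the binomial-sum evaluation of $e_d$ then finishes things off cleanly.

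There is no proof in the paper to compare against: the theorem is quoted from Bachman \cite{Bac2010} and used as a black box (to give the alternative proof of Theorem~\ref{Th_pol} in Section~\ref{sec:relation}). Your proof is in fact the natural direct argument, and it is essentially the same inclusion--exclusion computation that the paper carries out in the special case $\rho=\{p_1^{e_1},\ldots,p_s^{e_s}\}$ when proving Theorem~\ref{Th_pol} via \eqref{blah}--\eqref{edeltavaluation}; there the role of your sum over subsets $I$ is played by the sum over unitary divisors, and the role of $(1-1)^{s-|S(d)|}$ by the identity \eqref{unit_Mobius}. So your approach is the expected one, and it dovetails with what the paper does in the unitary special case.
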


Our proof of Theorem
\ref{setequal} makes use of the following basic
property of inclusion-exclusion polynomials.
\begin{thm}
\label{uniek}
Suppose that $Q_{\rho_1}(x)=Q_{\rho_2}(x),$ then
$\rho_1=\rho_2.$
\end{thm}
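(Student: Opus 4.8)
The plan is to recover $\rho$ from the polynomial $Q_{\rho}(x)$ by reading off suitable invariants. Write $\rho=\{r_1,\dots,r_s\}$ with $r_1<\dots<r_s$ and set $n_0=\prod_i r_i$. The first observation is that $n_0$ is determined by $Q_{\rho}$: from the defining product formula \eqref{Qformula} the degree of $Q_{\rho}$ is $n_0\prod_i(1-1/r_i)$ and, more usefully, the largest $d$ for which $\Phi_d$ divides $Q_{\rho}$ is $d=n_0$ (this is the term $d=n_0$ in $D_{\rho}$ from Theorem \ref{Thm:Bachman}, since $(n_0,r_i)=r_i>1$ for all $i$; no larger $d$ can appear because every $d\in D_{\rho}$ divides $n_0$). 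So by Theorem \ref{Thm:Bachman}, $n_0$ equals the largest index of a cyclotomic factor of $Q_{\rho}$, and $Q_{\rho_1}=Q_{\rho_2}$ forces $n_0(\rho_1)=n_0(\rho_2)=:N$.

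Next I would recover the individual $r_i$. Having fixed $N$, Theorem \ref{Thm:Bachman} identifies $D_{\rho}$ as the set of cyclotomic indices occurring in $Q_{\rho}$, and the condition defining $D_{\rho}$ is $d\mid N$ together with $(d,r_i)>1$ for all $i$. The key point is that $D_{\rho}$ determines $\rho$. Indeed, the $r_i$ are pairwise coprime divisors of $N$ with product $N$, so $N=\prod_i r_i$ is exactly the factorization of $N$ into maximal blocks each of which is "hit" by all members of $\rho$; equivalently, a divisor $d\mid N$ lies in $D_{\rho}$ iff $d$ shares a prime with each $r_i$, iff $d$ is divisible by at least one prime from the support of each $r_i$. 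From this one checks that the minimal elements of $D_{\rho}$ (under divisibility) are precisely the squarefree products $p_1\cdots p_s$ with $p_i\mid r_i$, and the $r_i$ themselves are recovered as $r_i = \prod\{p^{v_p(N)} : p\mid N,\ p \text{ divides the same } r_i\}$ — the grouping of the primes of $N$ into the classes $\{\,p : p\mid r_i\,\}$ is read off from which primes can be "separated" inside minimal elements of $D_{\rho}$. Thus $D_{\rho_1}=D_{\rho_2}$ implies the prime-support partitions agree, and since $N$ is common, $r_i(\rho_1)=r_i(\rho_2)$ for all $i$, i.e.\ $\rho_1=\rho_2$.

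I expect the main obstacle to be the combinatorial bookkeeping in the second step: showing cleanly that the partition of the primes of $N$ into the blocks $\operatorname{supp}(r_i)$ is an invariant of the set $D_{\rho}$. The cleanest route is probably to argue via minimal elements: a squarefree divisor $m$ of $N$ lies in $D_{\rho}$ and is minimal there exactly when $m$ picks up exactly one prime from each block $\operatorname{supp}(r_i)$; hence two primes $p,q\mid N$ lie in the same block if and only if there is \emph{no} minimal element of $D_{\rho}$ divisible by both $p$ and $q$. This recovers the partition, hence (using the common value $N$ and $r_i=\prod_{p\in\text{block }i}p^{v_p(N)}$) the whole of $\rho$, completing the proof. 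A small degenerate case to dispatch separately is $s=1$, where $\rho=\{N\}$ and $Q_{\rho}=(x^N-1)/(x-1)$ trivially pins down $N$.
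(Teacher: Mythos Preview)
Your argument is correct, but it takes a genuinely different route from the paper's. The paper does not go through Bachman's cyclotomic factorization $Q_\rho=\prod_{d\in D_\rho}\Phi_d$; instead it invokes Lemma~\ref{lem:Kroneckerexpansion} to write $Q_\rho$ uniquely as $\prod_{d\in\mathcal D}(x^d-1)^{e_d}$, so that $\mathcal D$ coincides with the set of all subproducts $\prod_{i\in T}r_i$, $T\subseteq\{1,\dots,s\}$. From this set the $r_i$ are then recovered greedily: the smallest element exceeding $1$ is $r_1$, the next element coprime to $r_1$ is $r_2$, and so on. Your approach instead reads off $D_\rho$ from the irreducible factorization (using Theorem~\ref{Thm:Bachman}) and reconstructs the partition of the primes of $N$ into the blocks $\operatorname{supp}(r_i)$ via the minimal elements of $D_\rho$; this is a clean combinatorial characterization and avoids the separate Kronecker-expansion lemma altogether. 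The trade-off is that you lean on Bachman's theorem (merely cited in the paper) rather than on the self-contained Lemma~\ref{lem:Kroneckerexpansion}, and your second step requires the extra verification that two primes of $N$ lie in the same block iff no minimal element of $D_\rho$ is divisible by both---which you sketch correctly. Either route yields the result; yours has the advantage of staying inside the cyclotomic-factor language already used elsewhere in the paper.
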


Its proof rests on the following easy lemma.
\begin{lemma}
\label{lem:Kroneckerexpansion}
A Kronecker polynomial can be written as
\begin{equation}
\label{product}
f(x)=x^s\prod_{d\in {\cal D}}(x^d-1)^{e_d},
\end{equation}
where  ${\cal D}$ is a unique finite set of integers and
$s\ge 0$ and the $e_d\ne 0$ are unique integers.
\end{lemma}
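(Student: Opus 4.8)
The plan is to produce the representation \eqref{product} by rewriting the cyclotomic decomposition of a Kronecker polynomial, and then to prove uniqueness by reading off the multiplicities of $f$ at $x=0$ and at the roots of unity and M\"obius-inverting. For existence, I would start from Kronecker's theorem \eqref{Kroneckerdecomposition}, $f(x)=x^s\prod_{D}\Phi_D(x)^{c_D}$ with $s\ge 0$ and $c_D\ge 1$ for only finitely many $D$. Substituting $\Phi_D(x)=\prod_{d\mid D}(x^d-1)^{\mu(D/d)}$ from \eqref{Phi} and regrouping, each factor $x^d-1$ picks up a total integer exponent, and only finitely many of these are nonzero because every $d$ that occurs divides one of the finitely many $D$ with $c_D\ge 1$. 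Deleting the indices with zero exponent then gives a representation $f(x)=x^s\prod_{d\in{\cal D}}(x^d-1)^{e_d}$ of the required shape, with ${\cal D}$ finite and all $e_d\ne 0$.

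For uniqueness I would assume $x^s\prod_{d\in{\cal D}}(x^d-1)^{e_d}=x^{s'}\prod_{d\in{\cal D}'}(x^d-1)^{e'_d}$ as rational functions and set $e_d=0$ for $d\notin{\cal D}$ and $e'_d=0$ for $d\notin{\cal D}'$. Since no factor $x^d-1$ vanishes or has a pole at $x=0$, the order of $f$ at $0$ is $s$ from the left-hand side and $s'$ from the right, so $s=s'$ and the monomial cancels. Then, fixing $n\ge 1$ and a primitive $n$th root of unity $\zeta_n$ and using that $x^d-1$ is squarefree and vanishes at $\zeta_n$ — necessarily simply — exactly when $n\mid d$, I obtain that the order of $f/x^s$ at $\zeta_n$ equals $\sum_{d:\,n\mid d}e_d$ computed from one side and $\sum_{d:\,n\mid d}e'_d$ from the other; both equal $v_n:=\operatorname{ord}_{\zeta_n}f$, an invariant of $f$ that vanishes for all but finitely many $n$. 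Inverting the ``sum over multiples'' relation $v_n=\sum_{d:\,n\mid d}e_d$ yields, for every $d$,
\[
\sum_{n:\,d\mid n}\mu(n/d)\,v_n
=\sum_{d'}e_{d'}\sum_{n:\,d\mid n\mid d'}\mu(n/d)
=\sum_{d':\,d\mid d'}e_{d'}\sum_{k\mid d'/d}\mu(k)=e_d ,
\]
all sums being finite and the last equality using the basic identity $\sum_{k\mid m}\mu(k)=\epsilon(m)$; the same computation with $e'$ in place of $e$ forces $e_d=e'_d$ for every $d$, hence ${\cal D}=\{d:e_d\ne 0\}={\cal D}'$ with matching exponents.

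I do not expect a real obstacle here: as the paper indicates, this is an easy lemma and the work is essentially bookkeeping. The one point to handle with care is finiteness — one must check that the exponents produced in the existence step, and the quantities $v_n$ in the uniqueness step, are supported on finite sets, so that the M\"obius inversion over the divisibility poset is legitimate. It is also worth keeping in mind that these intermediate exponents may be negative even though $f$ itself is a genuine monic polynomial.
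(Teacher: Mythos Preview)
Your argument is correct. The existence step is exactly what the paper does: start from the cyclotomic factorisation \eqref{Kroneckerdecomposition}, insert \eqref{Phi}, and collect exponents. Your uniqueness argument, however, is genuinely different. The paper's two proofs both proceed by an iterative ``peeling'' of the Taylor expansion around $x=0$: one looks at the lowest-order nontrivial term of $f(x)/x^s$, reads off the smallest $d$ and its exponent, divides out $(x^d-1)^{e_d}$, and repeats until reaching the constant $1$; this is essentially the Witt-expansion algorithm mentioned in the remark following the lemma. By contrast, you read off the order of $f$ at each primitive $n$th root of unity to obtain the aggregate $v_n=\sum_{n\mid d}e_d$, and then M\"obius-invert over the divisibility poset to recover each $e_d$ individually via the closed formula $e_d=\sum_{d\mid n}\mu(n/d)\,v_n$. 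Your route is more structural and yields an explicit expression for $e_d$ in terms of root multiplicities of $f$, at the cost of invoking roots of unity in $\C$; the paper's route is purely formal-power-series in flavour and generalises immediately to the Witt expansion of an arbitrary $f\in\Z[\![x]\!]$ with $f(0)=1$, where there are no roots to count. Your care about finite support is well placed and is exactly what makes the interchange of sums and the M\"obius inversion legitimate.
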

\begin{proof}[First proof]
From \eqref{Kroneckerdecomposition} and \eqref{Phi} we deduce that $f(x)$ can
be written as claimed. It remains to show the uniqueness. The integer $s\ge 0$
is merely the order of $f(x)$ in $x=0.$
We start by taking $\cal D$ to be the empty set.
We now consider
$f(x)/x^s$ around $x=0.$ Either $f(x)=1$ and we are done, or for some $\epsilon_1 \in \{-1,1\}$
we have
$\epsilon_1 f(x)=-1+ax^{d_1}+O(x^{d_1+1})$
with $a\ne 0$ an integer. In case $a$ is odd we have $(x^{d_1}-1)^{a}=-1+ax^{d_1}+O(x^{d_1+1}),$
in case $a$ is even we have $(x^{d_1}-1)^{a}=1-ax^{d_1}+O(x^{d_1+1}).$ We add $d_1$ to
$\cal D$ and put $e_{d_1}=a.$
Put $g(x):=f(x)(x^{d_1}-1)^{-a}.$
Note that either $g(x)=1$ or for an appropriate
$\epsilon_2 \in \{-1,1\}$ the identity  $\epsilon_2 g(x)=-1+bx^{d_2}+O(x^{d_2+1}),$
with $d_2>d_1$ and $b\ne 0$ holds. We now add $d_2$ to $\cal D$ and put
$e_{d_2}=b.$ We continue in this way until we arrive at the polynomial 1 (and this
will happen as we know a priori that $\cal D$ is finite).
\end{proof}
\begin{proof}[Second proof]
By \eqref{Kroneckerdecomposition}
we can write $f(x)=x^s(x-1)^{e_1}g(x),$
with $g(x)=\prod_{d\ge 2}\Phi_d(x)^{e_d}.$ As
$g(x)$ is the product of
cyclotomic $\Phi_d$ with $d\ge 2,$ it is selfreciprocal and satisfies $g(0)=1.$
Note that $s$ and $e_1$ are the order of $f(x)$ in
$x=0,$ respectively $x=1,$ and therefore unique.
Thus, w.l.o.g., we may assume that $f(x)$ is selfreciprocal and satisfies $f(0)=1.$
Around $x=0$
we either have
$f(x)=1$ or $f(x)=1-ax^{d_1}+O(x^{d_1+1})$
with $a\ne 0$ an integer and $d_1\ge 1.$
Note that $f(x)=(1-x^{d_1})^a+O(x^{d_1+1}).$
We start with setting $\cal D$ to be the empty set.
We add $d_1$ to
$\cal D$ and put $e_{d_1}=a.$
Put $g(x):=f(x)(1-x^{d_1})^{-a}.$
Note that either $g(x)=1$ or
$g(x)=1-bx^{d_2}+O(x^{d_2+1})$
with $d_2>d_1$ and $b\ne 0$. We now add $d_2$ to $\cal D$ and put
$e_{d_2}=b.$ We continue in this way until we arrive at the polynomial 1 (and this
will happen as we know a priori that $\cal D$ is finite).
We obtain
$$f(x)=\prod_{d\in {\cal D}}(1-x^d)^{e_d}
=\prod_{d\in {\cal D}}(x^d-1)^{e_d},$$
with ${\cal D}$ and the $e_d$ unique.
\end{proof}

\begin{remark} {\rm By a similar reasoning one can show that if
$f(x)\in\mathbb Z[\![x]\!]$ satisfies $f(x)\equiv 1\,({\rm mod~}x),$ there exist unique integers $e_1,e_2,\ldots$ such that
in $\mathbb Z[\![x]\!]$
$$
f(x)=\prod_{n=1}^{\infty}(1-x^n)^{e_n}.
$$
This is the so-called \emph{Witt expansion} which arises in many areas of mathematics, see
for example Moree \cite{Automata} for more information.}
\end{remark}

\begin{proof}[Proof of Theorem \ref{uniek}]
Given an inclusion-exclusion polynomial $f(x)$ in the standard form $f(x)=\sum_i a_ix^i,$ we
give a procedure leading to a unique $\rho=\{r_1,\ldots,r_m\}$ such that $f(x)=Q_{\rho}(x).$
As $f(x)$ is a Kronecker polynomial satisfying $f(0)=1,$
by Lemma \ref{lem:Kroneckerexpansion} we can
write
$$f(x)=\prod_{d\in {\cal D}}(x^d-1)^{e_d},$$
where ${\cal D}$ and the $e_d\ne 0$ are unique and computable.
Comparison with \eqref{Qformula} then shows
that ${\cal D}=\{n_0,n_i,n_{ij},n_{ijk},\ldots \}.$
The question is know which $\rho$
correspond to the set ${\cal D}$. We know for example,
that whatever
$\rho$ is, the product of its $r_i$ is certainly unique
and equals $n_0=\max {\cal D}.$
The numbers $r_i$ themselves, also turn out to be unique. We order the
elements in ${\cal D}$ in such a way that $d_1 < d_2 < \ldots.$ We put
$r_1=d_1.$ If $d_2$ is coprime with $r_1$ we put $r_2=d_2,$ if not we consider
$d_3.$ If an $d_i$ is coprime to every $r_1,\ldots,r_g$ we have at a certain point, we
put $r_{g+1}=d_i.$ If $r_m$ is the last $r$ number so found, we have
$f(x)=Q_{\rho}(x)$ with $\rho=\{r_1,\ldots,r_m\}$ uniquely determined.
\end{proof}

\subsection{Unitary cyclotomics as inclusion-exclusion polynomials}
\label{sec:relation}
Armed with Theorem \ref{uniek} we are now ready to prove Theorem \ref{setequal}.

\begin{proof}[Proof of Theorem \ref{setequal}]
Let $n\ge 2$ be an integer and $\prod_{i=1}^s p_i^{e_i}$ its canonical
factorization with $p_1^{e_1}<p_2^{e_2}
<\ldots <p_s^{e_s}$. On comparing formula
\eqref{Phi_star} with \eqref{Qformula} it
follows that $\Phi_n^*(x)=Q_{\{p_1^{e_1},\ldots, p_s^{e_s}\}}(x)$.
Reversely, given any ascending sequence of prime powers $p_1^{e_1}< \ldots <p_s^{e_s}$
with distinct base primes $p_1,\ldots,p_s,$ the polynomial
$Q_{\{p_1^{e_1},\ldots,\, p_s^{e_s}\}}(x)$ is seen to correspond to
$\Phi_n^*(x)$ with $n=\prod_{i=1}^s p_i^{e_i}.$ The one-to-one
part of the claim is a consequence of Theorem \ref{uniek}.
\end{proof}

Theorem \ref{setequal} together with Theorem  \ref{Thm:Bachman} can
be used to reprove Theorem
\ref{Th_pol}:
\begin{proof}[Alternative proof of Theorem {\rm \ref{Th_pol}}]
For $n=1$ the result is obviously true.
Now let $n\ge 2$ be an integer and $\prod_{i=1}^s p_i^{e_i}$ its canonical
factorization with $p_1^{e_1}<p_2^{e_2}
<\ldots <p_s^{e_s}$. Put $\rho=\{p_1^{e_1},\ldots,p_s^{e_s}\}$. Then
by Theorem \ref{setequal}, respectively Theorem  \ref{Thm:Bachman},
$$\Phi_n^*(x)=Q_{\rho}(x)=\prod_{d\in D_{\rho}}\Phi_d(x),$$ with
$D_{\rho}=\{d:d\mid n,~p_1\cdots p_s\mid d\}=\{d:d\mid n,~k(d)=k(n)\}$.
\end{proof}

\section{Applications of Theorem  \ref{Th_general}}

By selecting $g(n)=c_n(k)$, $g^*(n)=c^*_n(k)$ (Ramanujan, resp. unitary Ramanujan sums), where
$f(n)=\varrho_n(k)$, we deduce from Theorem \ref{Th_general} the following identities.

\begin{cor} \label{Cor_Ram} For any $n,k\in \N$ we have
\begin{equation} \label{c_star_c}
c^*_n(k)= \sum_{\substack{d\mid n\\ \kappa(d)=\kappa(n)}} c_d(k)
\end{equation}
and
\begin{equation*}
c_n(k)= \sum_{\substack{d\mid n\\ \kappa(d)=\kappa(n)}} c^*_d(k) \mu(n/d).
\end{equation*}
\end{cor}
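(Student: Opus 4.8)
The plan is to apply Theorem \ref{Th_general} directly with the substitution indicated in the statement, so essentially all the work is verifying that the hypotheses are met. I would set $g(n)=c_n(k)$, $g^*(n)=c^*_n(k)$, and $f(n)=\varrho_n(k)$, where $k$ is regarded as a fixed parameter. The two things to check are: first, that $f(n)=\sum_{d\mid n} g(d)$, i.e.\ $\varrho_n(k)=\sum_{d\mid n} c_d(k)$; and second, that $f(n)=\sum_{d\mid\mid n} g^*(d)$, i.e.\ $\varrho_n(k)=\sum_{d\mid\mid n} c^*_d(k)$. The second of these is precisely identity \eqref{unit_c_2}, so it requires no further argument. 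The first is the classical analog of \eqref{unit_c_2}, namely $\sum_{d\mid n} c_d(k)=n$ if $n\mid k$ and $0$ otherwise; this is a standard property of ordinary Ramanujan sums (it follows, for instance, by the same orthogonality-of-characters computation that gives \eqref{unit_c_2}, or one can simply cite it as well known, cf.\ the references to Apostol, McCarthy, Sivaramakrishnan already in the paper).

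Once both hypotheses are in place, Theorem \ref{Th_general} immediately yields \eqref{g_star} and \eqref{g} for this choice of $g,g^*$, which are exactly the two displayed formulas of the corollary: \eqref{g_star} becomes \eqref{c_star_c}, namely $c^*_n(k)=\sum_{d\mid n,\,\kappa(d)=\kappa(n)} c_d(k)$, and \eqref{g} becomes $c_n(k)=\sum_{d\mid n,\,\kappa(d)=\kappa(n)} c^*_d(k)\mu(n/d)$. There is no genuine obstacle here; the only point requiring a moment's care is to make sure the two divisor-sum identities for $\varrho_n(k)$ are recorded with matching normalizations (same $\varrho_n(k)$ on both sides), but since the paper has already stated \eqref{unit_c_2} in the form it needs, and the classical identity is its exact mirror, this is routine.

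I would therefore write the proof as essentially one sentence: recall that $\sum_{d\mid n}c_d(k)=\varrho_n(k)$ is the classical companion of \eqref{unit_c_2}, so with $g(n)=c_n(k)$, $g^*(n)=c^*_n(k)$ and $f(n)=\varrho_n(k)$ the hypotheses of Theorem \ref{Th_general} hold, and \eqref{g_star}, \eqref{g} specialize to the two claimed identities. If one wanted to be fully self-contained, a short parenthetical derivation of $\sum_{d\mid n}c_d(k)=\varrho_n(k)$ could be included: expand $c_d(k)=\sum_{1\le j\le d,\ (j,d)=1}\zeta_d^{jk}$, sum over $d\mid n$, and observe that as $d$ ranges over divisors of $n$ and $j$ over residues coprime to $d$, the fractions $j/d$ range exactly once over all of $\{1/n,2/n,\ldots,n/n\}$, so $\sum_{d\mid n}c_d(k)=\sum_{r=1}^n\zeta_n^{rk}$, which is $n$ when $n\mid k$ and $0$ otherwise.

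\begin{proof}[Proof of Corollary \ref{Cor_Ram}]
Recall the classical identity $\sum_{d\mid n} c_d(k)=\varrho_n(k)$, which is the companion of \eqref{unit_c_2}: grouping the residues $1,\ldots,n$ according to the value of $(j,n)=n/d$ shows that $\sum_{r=1}^n \zeta_n^{rk}=\sum_{d\mid n} \sum_{\substack{1\le j\le d\\ (j,d)=1}}\zeta_d^{jk}=\sum_{d\mid n} c_d(k)$, and the left-hand side equals $n$ if $n\mid k$ and $0$ otherwise. Hence, taking $f(n)=\varrho_n(k)$, $g(n)=c_n(k)$ and $g^*(n)=c^*_n(k)$, we have $f(n)=\sum_{d\mid n} g(d)$ by the above, and $f(n)=\sum_{d\mid\mid n} g^*(d)$ by \eqref{unit_c_2}. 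Thus the hypotheses of Theorem \ref{Th_general} are satisfied, and \eqref{g_star} and \eqref{g} give precisely \eqref{c_star_c} and the second displayed identity.
\end{proof}
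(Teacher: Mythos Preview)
Your proof is correct and follows exactly the paper's approach: the paper simply states that the corollary follows from Theorem \ref{Th_general} with the choices $g(n)=c_n(k)$, $g^*(n)=c^*_n(k)$, $f(n)=\varrho_n(k)$, and you have spelled out the verification of the hypotheses (the classical identity $\sum_{d\mid n} c_d(k)=\varrho_n(k)$ together with \eqref{unit_c_2}) that the paper leaves implicit.
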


Let $\id_s(n)=n^s$ ($s\in \R$). In the case $g(n)=J_s(n):=(\mu*\id_s)(n)$ (Jordan function of order $s$),
$g^*(n)=J^*_s(n):=(\mu^* \times \id_s)(n)$, the unitary Jordan function of order $s$, where
$f(n)=n^s$, we deduce the
following corollary of Theorem \ref{Th_general}.
\begin{cor} \label{Cor_Jordan} For every $n\in \N$, $s\in \R$ we have
\begin{equation} \label{J_star_J}
J^*_s(n)= \sum_{\substack{d\mid n\\ \kappa(d)=\kappa(n)}} J_s(d)
\end{equation}
and
\begin{equation*}
J_s(n)= \sum_{\substack{d\mid n\\ \kappa(d)=\kappa(n)}} J^*_s(d) \mu(n/d).
\end{equation*}
\end{cor}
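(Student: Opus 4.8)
The plan is to invoke Theorem~\ref{Th_general} directly, with the choices $g=J_s$, $g^*=J^*_s$ and $f=\id_s$; once the two hypotheses of that theorem are checked, the identity \eqref{J_star_J} and its companion come out as the specializations \eqref{g_star} and \eqref{g}. So the entire task reduces to verifying that $f(n)=n^s$ is simultaneously the Dirichlet-divisor sum of $J_s$ and the unitary-divisor sum of $J^*_s$.

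For the first of these, since $J_s=\mu*\id_s$ by definition, the classical M\"obius identity $\1*\mu=\epsilon$ gives $\sum_{d\mid n}J_s(d)=(\1*\mu*\id_s)(n)=(\epsilon*\id_s)(n)=n^s$, so $f=\id_s$ is indeed $\sum_{d\mid n}g(d)$. For the second, since $J^*_s=\mu^*\times\id_s$, the unitary M\"obius relation \eqref{unit_Mobius}, which states that $\1\times\mu^*=\epsilon$, together with the fact that $\epsilon$ is the identity for the unitary convolution, gives $\sum_{d\mid\mid n}J^*_s(d)=(\1\times\mu^*\times\id_s)(n)=(\epsilon\times\id_s)(n)=n^s$. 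Hence both hypotheses of Theorem~\ref{Th_general} hold with this common $f$.

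Applying Theorem~\ref{Th_general} then yields at once $J^*_s(n)=\sum_{d\mid n,\,\kappa(d)=\kappa(n)}J_s(d)$ and $J_s(n)=\sum_{d\mid n,\,\kappa(d)=\kappa(n)}J^*_s(d)\mu(n/d)$, which is precisely the assertion of the corollary. I do not expect any genuine obstacle: the only point needing a moment's care is the bookkeeping that the same $f$ serves on the Dirichlet side and on the unitary side, and each of those facts is an immediate consequence of the relevant M\"obius inversion formula. One may additionally remark, via the comment following Theorem~\ref{Th_general}, that $J^*_s$ is multiplicative, since $\id_s$ is completely multiplicative and hence $J_s$ is multiplicative; but this observation is not needed for the two identities themselves.
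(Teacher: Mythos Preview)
Your proof is correct and follows exactly the approach the paper takes: apply Theorem~\ref{Th_general} with $g=J_s$, $g^*=J_s^*$, and $f=\id_s$. The paper states this choice in the sentence immediately preceding the corollary without spelling out the verifications of $\sum_{d\mid n}J_s(d)=n^s$ and $\sum_{d\mid\mid n}J_s^*(d)=n^s$; you have simply made those routine checks explicit.
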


Identity \eqref{c_star_c} was deduced by McCarthy \cite[Ch.\ 4]{McC1986}, while
\eqref{J_star_J} was obtained by Cohen \cite[Lemma 3.1]{Coh1961} using different reasonings.
If $s=1$, then $J_1(n)=\varphi(n)$, $J^*_1(n)=\varphi^*(n)$ and we deduce the
following corollary.
\begin{cor} We have
\begin{equation*}
\varphi^*(n)= \sum_{\substack{d\mid n\\ \kappa(d)=\kappa(n)}} \varphi(d) \qquad (n\in \N).
\end{equation*}
\end{cor}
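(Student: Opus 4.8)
The plan is to derive this directly as the special case $s=1$ of Corollary~\ref{Cor_Jordan}. First I would recall that $J_1 = \mu * \mathrm{id}_1$, so that $(J_1 * \1)(n) = \mathrm{id}_1(n) = n$, which is exactly the statement that $\sum_{d\mid n}\varphi(d)=n$; similarly $J^*_1 = \mu^* \times \mathrm{id}_1$ satisfies $\sum_{d\mid\mid n} J^*_1(d) = n$ because $\mu^*$ is the unitary-convolution inverse of $\1$ by \eqref{unit_Mobius}. Thus, taking $g = J_1 = \varphi$ and $g^* = J^*_1 = \varphi^*$ with the common value $f(n) = n = \varrho_n(0)$, wait --- more cleanly, $f(n)=n$ --- the hypotheses of Theorem~\ref{Th_general} are met: $f(n)=\sum_{d\mid n} g(d)$ and $f(n)=\sum_{d\mid\mid n} g^*(d)$. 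Then \eqref{g_star} immediately yields
\[
\varphi^*(n) = \sum_{\substack{d\mid n\\ \kappa(d)=\kappa(n)}} \varphi(d),
\]
which is the claim.

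Alternatively, and perhaps more transparently for a reader, I would invoke Corollary~\ref{Cor_Jordan} verbatim: set $s=1$ in \eqref{J_star_J}, note $J_1(n) = (\mu*\mathrm{id}_1)(n) = \varphi(n)$ and $J^*_1(n) = (\mu^*\times\mathrm{id}_1)(n) = \varphi^*(n)$ (the latter because $\varphi^*(p^a) = p^a - 1$ and $\varphi^*$ is multiplicative, matching $\mu^* \times \mathrm{id}_1$ on prime powers), and the corollary drops out. Either route is a two-line argument.

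Since essentially nothing needs proving here beyond bookkeeping, there is no real obstacle; the only point requiring a moment's care is confirming that the unitary Jordan function $J^*_1$ genuinely coincides with $\varphi^*$. This holds because both are multiplicative and agree on prime powers: $J^*_1(p^a) = (\mu^*\times\mathrm{id}_1)(p^a) = \mathrm{id}_1(p^a) - \mathrm{id}_1(1) = p^a - 1 = \varphi^*(p^a)$. Given that, the corollary is an immediate instance of Corollary~\ref{Cor_Jordan}, and I would simply write "This is the case $s=1$ of Corollary~\ref{Cor_Jordan}, since $J_1=\varphi$ and $J^*_1=\varphi^*$." as the entire proof.
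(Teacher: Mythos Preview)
Your argument is correct and matches the paper's own derivation exactly: the paper states this corollary as the case $s=1$ of Corollary~\ref{Cor_Jordan}, noting $J_1=\varphi$ and $J^*_1=\varphi^*$. For completeness, the paper also remarks that the identity follows equivalently by setting $k=n$ in Corollary~\ref{Cor_Ram} or by comparing degrees in \eqref{Phi_Phi_star}.
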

This result also follows by setting $k=n$ in Corollary \ref{Cor_Ram}, or by comparing the degrees of the polynomials in \eqref{Phi_Phi_star}.

We remark that Cohen \cite[Lemma 4.1]{Coh1961} also showed that the identity
\begin{equation*}
\kappa^s(n)= \sum_{\substack{d\mid n\\ \kappa(d)=\kappa(n)}} d^s \mu^2(d) \qquad (n\in \N)
\end{equation*}
holds for any $s$.

Putting $k=1$ in Corollary \ref{Cor_Ram} and
noting that $\kappa(n)$ is the only squarefree
divisor $d$ of $n$ satisfying $\kappa(d)=\kappa(n),$
we obtain the following corollary.
\begin{cor} We have
\begin{equation*}
\mu^*(n)= \sum_{\substack{d\mid n\\ \kappa(d)=\kappa(n)}} \mu(d)=\mu(\kappa(n)) \qquad (n\in \N).
\end{equation*}
\end{cor}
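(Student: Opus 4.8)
The plan is to specialize identity \eqref{c_star_c} of Corollary \ref{Cor_Ram} to the case $k=1$. Recall from the remarks following \eqref{unit_c_2} that $c^*_n(1)=\mu^*(n)$, and likewise $c_n(1)=\mu(n)$ for the classical Ramanujan sums. Setting $k=1$ in \eqref{c_star_c} therefore gives at once
\[
\mu^*(n)=c^*_n(1)=\sum_{\substack{d\mid n\\ \kappa(d)=\kappa(n)}} c_d(1)=\sum_{\substack{d\mid n\\ \kappa(d)=\kappa(n)}} \mu(d)\qquad (n\in\N),
\]
which is the first of the two asserted equalities.

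Next I would evaluate the resulting sum in closed form. A divisor $d$ of $n$ satisfies $\kappa(d)=\kappa(n)$ exactly when every prime dividing $n$ also divides $d$, i.e.\ when $\kappa(n)\mid d$. For such $d$ the term $\mu(d)$ is zero unless $d$ is squarefree, and the unique squarefree divisor of $n$ that is a multiple of $\kappa(n)$ is $\kappa(n)$ itself. Hence every term of the sum vanishes except the one coming from $d=\kappa(n)$, so the sum equals $\mu(\kappa(n))$, which is the second equality.

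I do not expect any real obstacle here; the only step needing a moment's attention is the identification of $\kappa(n)$ as the one squarefree divisor $d$ of $n$ with $\kappa(d)=\kappa(n)$, which is in any case recorded in the paper immediately before the statement. As a sanity check one has $\mu^*(n)=(-1)^{\omega(n)}$ and $\mu(\kappa(n))=(-1)^{\omega(\kappa(n))}=(-1)^{\omega(n)}$, since $\kappa(n)$ has exactly the same prime divisors as $n$; indeed this chain of equalities, together with the collapse of the sum described above, furnishes an alternative self-contained proof that does not invoke Corollary \ref{Cor_Ram} at all.
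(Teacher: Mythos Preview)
Your proof is correct and follows exactly the route indicated in the paper: set $k=1$ in Corollary~\ref{Cor_Ram} and then observe that $\kappa(n)$ is the unique squarefree divisor $d$ of $n$ with $\kappa(d)=\kappa(n)$, so that the sum collapses to $\mu(\kappa(n))$. Your additional sanity check via $\mu^*(n)=(-1)^{\omega(n)}=(-1)^{\omega(\kappa(n))}=\mu(\kappa(n))$ is a pleasant bonus not in the paper.
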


On taking $g(n)=\Lambda(n)$, $g^*(n)=\Lambda^*(n)$
and by the well-known identity $\sum_{d \mid n}\Lambda(d)=\log n,$
we see that $f(n)=\log n$ and obtain the final corollary.
\begin{cor} We have
\begin{equation*}
\Lambda^*(n)= \sum_{\substack{d\mid n\\ \kappa(d)=\kappa(n)}} \Lambda(d) \qquad (n\in \N).
\end{equation*}
\end{cor}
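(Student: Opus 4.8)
The plan is to obtain this corollary as a direct instance of Theorem~\ref{Th_general}, exactly in the spirit of the preceding corollaries. Take $g(n)=\Lambda(n)$ and $g^*(n)=\Lambda^*(n)$. Two things must be checked before Theorem~\ref{Th_general} applies. First, we must identify the function $f$: by definition $f(n)=\sum_{d\mid n}g(d)=\sum_{d\mid n}\Lambda(d)$, which by the classical von Mangoldt identity equals $\log n$.

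Second, the standing hypothesis of Theorem~\ref{Th_general} demands that this same $f$ also satisfy $f(n)=\sum_{d\mid\mid n}g^*(d)$, i.e.\ that $\log n=\sum_{d\mid\mid n}\Lambda^*(d)$. This is precisely the unitary von Mangoldt identity recorded in Section~\ref{flauweevaluatie}; if one prefers a self-contained check, write $n=p_1^{a_1}\cdots p_s^{a_s}$ and note that among the unitary divisors $d\mid\mid n$ the function $\Lambda^*$ is supported exactly on the prime powers $p_i^{a_i}$, with $\Lambda^*(p_i^{a_i})=a_i\log p_i$, so that $\sum_{d\mid\mid n}\Lambda^*(d)=\sum_{i=1}^s a_i\log p_i=\log n$.

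Once both hypotheses are in place, identity~\eqref{g_star} of Theorem~\ref{Th_general} reads $g^*(n)=\sum_{d\mid n,\ \kappa(d)=\kappa(n)}g(d)$, which is verbatim the asserted formula. I do not expect any genuine obstacle; the only point to keep in mind is the remark in the proof of Theorem~\ref{Th_general} that $g$ and $g^*$ determine one another through $f$ via \eqref{g_Mobius} and \eqref{g_star_Mobius}, so the consistency step above is exactly what guarantees that the $g^*$ produced by the theorem is our $\Lambda^*$ and not some other function. Alternatively one could bypass Theorem~\ref{Th_general} and argue directly from \eqref{g_star_Mobius}, namely $\Lambda^*(n)=\sum_{d\mid\mid n}(\log d)\,\mu^*(n/d)$, using the exponent computation $e_\delta=\epsilon(n/d)$ of \eqref{edeltavaluation}; but routing everything through Theorem~\ref{Th_general} is the cleanest route.
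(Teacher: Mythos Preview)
Your proposal is correct and follows essentially the same route as the paper: set $g=\Lambda$, $g^*=\Lambda^*$, identify $f(n)=\sum_{d\mid n}\Lambda(d)=\log n$, and invoke Theorem~\ref{Th_general}. Your explicit verification of the second hypothesis $\sum_{d\mid\mid n}\Lambda^*(d)=\log n$ is slightly more detailed than the paper, which simply takes this identity as known from Section~\ref{flauweevaluatie}, but the argument is the same.
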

From it we deduce
the truth of \eqref{lambdastar}.

\section{Connection with unitary Ramanujan sums}

Certain formulas concerning the unitary cyclotomic polynomials and unitary Ramanujan sums can be easily deduced from their classical
analogues, by using the above identities. For example, we have

\begin{cor} For any $n>1$ and $x\in \C$, $|x|<1$,
\begin{equation}
\Phi^*_n(x) = \exp \left(-\sum_{k=1}^{\infty} \frac{c^*_n(k)}{k}x^k \right).
\end{equation}
\end{cor}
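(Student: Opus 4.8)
The plan is to exploit the logarithmic-derivative identity for $\Phi_n^*$ together with the classical identity \eqref{unit_c_2}, mirroring the well-known argument in the classical cyclotomic case. First I would start from \eqref{Phi_star-1}, namely $\Phi_n^*(x)=\prod_{d\mid\mid n}(1-x^d)^{\mu^*(n/d)}$ for $n>1$, and take the (formal) logarithm, valid for $|x|<1$ since each factor $1-x^d$ lies near $1$. Using the power series $\log(1-x^d)=-\sum_{k\ge 1}x^{dk}/k$, one obtains
\begin{equation*}
\log\Phi_n^*(x)=-\sum_{d\mid\mid n}\mu^*\!\left(\tfrac{n}{d}\right)\sum_{k=1}^{\infty}\frac{x^{dk}}{k}.
\end{equation*}

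Next I would reorganize this double sum by collecting, for each fixed exponent $m\ge 1$ of $x$, all pairs $(d,k)$ with $dk=m$ and $d\mid\mid n$. Writing $m=dk$, the coefficient of $x^m$ becomes $-\sum_{d\mid\mid n,\ d\mid m}\frac{\mu^*(n/d)}{m/d}=-\frac{1}{m}\sum_{d\mid\mid (m,n)_*}d\,\mu^*(n/d)$, where I use that $d\mid\mid n$ and $d\mid m$ together are equivalent to $d\mid\mid (m,n)_*$ (this equivalence is already noted in the paper, e.g.\ in Section~\ref{basicunitary} and in the proof of \eqref{firstidentity}). By \eqref{unit_c_1} the inner sum is exactly $c_n^*(m)$. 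Hence
\begin{equation*}
\log\Phi_n^*(x)=-\sum_{m=1}^{\infty}\frac{c_n^*(m)}{m}x^m,
\end{equation*}
and exponentiating gives the claim.

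The only mild obstacle is justifying the rearrangement of the double series and the use of the formal logarithm: for $|x|<1$ all the series involved converge absolutely (each $|x|^{dk}$ is dominated by a convergent geometric-type series), so Fubini/Tonelli-style interchange of summation is legitimate, and $\Phi_n^*(0)=1$ for $n>1$ (visible from \eqref{Phi_star-1}) so that $\log\Phi_n^*(x)$ is a well-defined analytic branch near $0$ with $\log\Phi_n^*(0)=0$, matching the series on the right having no constant term. Everything else is a routine index manipulation, so I do not expect any real difficulty; alternatively, one could phrase the whole computation at the level of formal power series in $\mathbb{Z}[\![x]\!]$ and then note convergence on $|x|<1$ separately.
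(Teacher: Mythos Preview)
Your argument is correct. You start from the product formula \eqref{Phi_star-1}, expand each $\log(1-x^d)$ as a power series, collect terms according to the exponent of $x$, and identify the resulting inner sum with $c_n^*(m)$ via \eqref{unit_c_1}; the equivalence ``$d\mid\mid n$ and $d\mid m$'' $\Leftrightarrow$ ``$d\mid\mid (m,n)_*$'' you invoke is exactly the one stated and used in the paper. The absolute convergence for $|x|<1$ justifies the rearrangement, as you note.

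The paper takes a genuinely different route: it quotes the classical identity
$\Phi_n(x)=\exp\bigl(-\sum_{k\ge1}c_n(k)x^k/k\bigr)$,
then applies Theorem~\ref{Th_pol} to write $\Phi_n^*(x)$ as the product of $\Phi_d(x)$ over $d\mid n$ with $\kappa(d)=\kappa(n)$, and finally uses \eqref{c_star_c} to sum the exponents into $c_n^*(k)$. In other words, the paper transfers the result from the classical case through the structural dictionary of Theorems~\ref{Th_pol} and~\ref{Th_general}, whereas you reprove it from scratch in the unitary setting, mirroring the standard classical computation. Your approach is more self-contained and requires only \eqref{Phi_star-1} and \eqref{unit_c_1}; the paper's approach, on the other hand, illustrates the unifying theme of that section, namely that unitary identities can be read off from classical ones via the $\kappa(d)=\kappa(n)$ decomposition.
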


\begin{proof} It is known that for any $n>1$ and $|x|<1$,
\begin{equation} \label{cyclotomic_Ramanujan_exp}
\Phi_n(x) = \exp \left(-\sum_{k=1}^{\infty} \frac{c_n(k)}{k}x^k \right),
\end{equation}
see Nicol \cite[Cor.\ 3.2]{Nic1962}, T\'oth \cite[Th.\ 1]{Tot2010} or Herrera-Poyatos
and Moree \cite{AndresPieter}. By Theorem  \ref{Th_pol} and the identity
\eqref{c_star_c} we then obtain
 $$\Phi^*_{n}(x)= \prod_{\substack{d\mid n\\ \kappa(d)=\kappa(n)}} \Phi_d(x)=\exp \left(-\sum_{k=1}^{\infty}\frac{x^k}{k}\sum_{\substack{d\mid n\\ \kappa(d)=\kappa(n)}} c_d(k)\right)=\exp \left(-\sum_{k=1}^{\infty} \frac{c^*_n(k)}{k}x^k \right),$$
completing the proof.
\end{proof}

For any $n>1$ the series $\sum_{k=1}^{\infty} c_n(k)/k$
converges, see, e.g., H\"older \cite{Hol1936}. Therefore, Lemma \ref{valueat1B} in combination with
\eqref{cyclotomic_Ramanujan_exp}
gives
\begin{equation} \label{Ramanaujan_Lambda}
\log \Phi_n(1)=\Lambda(n)= - \sum_{k=1}^{\infty} \frac{c_n(k)}{k}
\quad (n>1).
\end{equation}
The second identity was first discovered by Ramanujan and expresses an arithmetic function as an infinite series involving Ramanujan
sums (for a different proof see Sivaramakrishnan \cite[Theorem 87]{Siv1989}).
Such expressions are now called
\textit{Ramanujan expansions}, see, e.g.,
Schwarz and Spilker \cite[Chapter VIII]{ScSp}.

The convergence of $\sum_{k=1}^{\infty} c_n(k)/k$
for $n>1$ in combination with
\eqref{c_star_c} shows that also
$\sum_{k=1}^{\infty} c_n^*(k)/k$ converges for
any $n>1$.
Thus, by Lemma \ref{valueat1B} again,
\begin{equation} \label{unit_Ramanujan_Lambda}
\log \Phi^*_n(1) = \Lambda^*(n)= - \sum_{k=1}^{\infty} \frac{c^*_n(k)}{k}  \quad (n>1).
\end{equation}

The second identity in \eqref{unit_Ramanujan_Lambda} was obtained by Subbarao \cite{Sub1966}, without referring to unitary
cyclotomic polynomials and with an incomplete proof, namely without showing that the corresponding series converges.

\vskip2mm

\noindent {\tt Acknowledgments}. 
Part of this work was done when the second author visited the Max Planck Institute of
Mathematics in Bonn, Germany, in February 2017. He thanks the Institute for the invitation and support.
We thank Gennady Bachman for correspondence that led
to a simplification of the proof of Theorem \ref{uniek}. Bin Zhang (Qufu Normal University), kindly did computer
work on $\mathcal B(k)$ (defined in \eqref{bk}) on request
of the first author.

\vskip4mm

\noindent
Pieter Moree \\
Max-Planck-Institut f\"ur Mathematik \\
Vivatsgasse 7, 53111 Bonn, Germany  \\
E-mail: {\tt moree@mpim-bonn.mpg.de}
\vskip2mm

\noindent
L\'aszl\'o T\'oth  \\
Department of Mathematics \\
University of P\'ecs \\
Ifj\'us\'ag \'utja 6, 7624 P\'ecs, Hungary \\
E-mail: {\tt ltoth@gamma.ttk.pte.hu}

\end{document}